\def\section{\@startsection{section}{1}%
	\z@{.7\linespacing\@plus\linespacing}{.5\linespacing}%
	{\bfseries
		\centering
}}
\def\@secnumfont{\bfseries}
\newcommand{\R}{\mathbb R}
\newcommand{\N}{\mathbb N}
\newcommand{\Z}{\mathbb Z}
\newcommand{\E}{\mathbb E}
\newcommand\bns{\beta_n^s}
\newcommand\bnt{\beta_n^t}
\newcommand\wc{\widetilde{\mathcal{C}}}
\newcommand\wnu{\widetilde{\nu}}
\newcommand\wE{\widetilde E}
\newcommand{\Dim}{\mbox{Dim}_H}
\newcommand{\ep}{\varepsilon}
\newcommand{\lp}{\left(}
\newcommand{\rp}{\right)}
\newtheorem{theorem}{Theorem}[section]
\newtheorem{lemma}[theorem]{Lemma}
\newtheorem{prop}[theorem]{Proposition}
\theoremstyle{definition}
\newtheorem{defn}[theorem]{Definition}
\theoremstyle{remark}
\numberwithin{equation}{section}
\newcommand{\dhaus}[1]{\text{Dim}_{H} \left( #1 \right)}
\newcommand{\proj}{\text{proj}_{\theta}}
\newcommand{\ds}{\displaystyle}
\newcommand{\norm}[1]{\left\lVert#1\right\rVert_2}
\begin{document}
	\title[Potential and projection for macroscopic dimension]{Potential method and projection theorems for Macroscopic Hausdorff Dimension}
	\author[L. Daw]{Lara Daw}
	\author[S. Seuret]{St\'ephane Seuret }
	\address{Lara Daw, University of Luxembourg, Department of Mathematics, Luxembourg}
\address{St\'ephane Seuret, Universit\'e Paris-Est, LAMA (UMR 8050), UPEMLV, UPEC, CNRS, F-94010, Cr\'eteil, France} 	\email{lara.daw@uni.lu, seuret@u-pec.fr }
	\begin{abstract}
		The macroscopic Hausdorff dimension $\dhaus{E}$ of a set $E\subset \R^d$ was introduced by Barlow and Taylor  to quantify a "fractal at large scales" behavior of unbounded, possibly discrete,  sets $E$. We develop a method based on potential theory in order to estimate   this dimension   in $\R^d$. Then, we apply this method to obtain Marstrand-like projection theorems: given a set $E\subset \R^2$, for almost every $\theta\in [0,2\pi]$, the projection of $E$ on the straight line passing through 0 with angle $\theta$ has  dimension  equal to $\min(\dhaus{E},1)$.
		
			\end{abstract}
	
	\maketitle
	
	\medskip\noindent
	
	{\bf AMS MSC:} 28AXX, 31C40
	
	{\bf Keywords:} Macroscopic Hausdorff dimension;  Fine properties of sets;  Potential  theory for dimensions; Marstrand-Mattila projection theorem.
	
	\allowdisplaybreaks

\section{Introduction}

Fractal geometry provide a general framework for studying   sets possessing either irregular or self-reproducing (deterministic or random, self-similar or self-affine)  properties. Most definitions of fractal dimensions of sets included in  $\R^d$ are based on the  local properties (also known as microscopic) of the set. Taking into consideration that many statistical physics models are built on discrete spaces, Barlow and Taylor \cite{barlow1992, barlow1989} introduced  a new notion of dimension to study unbounded "fractal-like" sets   on discrete space. This  so-called macroscopic Hausdorff dimension (see Definition \ref{defMacro}  below) has proved to be useful in  quantifying the behavior at infinity of several objects, beyond   the transient range of random walks in $\Z^d$  which was the original motivation of Barlow and Taylor in \cite{barlow1992}. 

Macroscopic Hausdorff dimension is actually defined for every set (not only discrete) in $\R^d$ \cite{barlow1992}. It is a discrete analog of Hausdorff dimension, and the word macroscopic comes from the fact that this dimension ignores the local structure of the sets. At the same time, the macroscopic Hausdorff dimension assesses the asymptotic behavior at infinity of the sets, so it is very relevant when one is interested in the description of infinite objects, how they fill the space "at large scale". The macroscopic Hausdorff dimension was a key tool used by  Xiao et Zheng \cite{xiao2013} in studying the range of a random walk in random environment. It is   related to \cite{khoshnevisan2017macroscopic} where Khoshnevisan and Xiao are concerned with the macroscopic   geometry of other  random sets. In \cite{khoshnevisan2017intermittency}, Khoshnevisan, Kim and Xiao found out a multifractal behavior for the macroscopic dimension of tall peaks of solutions to stochastic PDEs.   Georgiou \emph{et el} \cite{georgiou2018dimension} solved Barlow and Taylor question \cite[Problem, p. 145]{barlow1992} by qualifying the range of
an arbitrary transient random walk. The macroscopic Hausdorff dimension was also useful for studying the large scale structure of sojourn sets associated to the Brownian motion \cite{seuret2019sojourn}, the fractional Brownian motion \cite{nourdin2018sojourn, daw2021uniform}, and the Rosenblatt process \cite{daw2021fractal}. 

In this paper we are interested in building various methods for estimating the macroscopic Hausdorff dimension. Recalling the fact that macroscopic Hausdorff dimension is a discrete analog of the Hausdorff dimension, we start by stating the estimating methods used for the Hausdorff dimension. In most cases, when estimating  the Hausdorff dimension of a set $E$, the difficult part consists in finding a suitable lower bound for $\dim_H(E)$.
Various methods exist to find lower bounds for the standard Hausdorff dimension, and it is a natural question to ask whether these methods have their counterparts for the macroscopic Hausdorff dimension. The two usual techniques are the mass distribution principle and the potential theoretic method.

The mass distribution principle, see  for instance \cite[page 67]{falconer2004fractal}, states that if a set $E\subset \R^d$ and a Borel finite measure $\mu$ are such that $\mu(E)=1$ and $\mu(B(x,r))\leq C r^s$ for every $x\in \R^d$ and $r>0$, then the $s$-dimensional Hausdorff measure $\mathcal{H}^s (E)$  is larger than $\mu(E)/C$, and so $E$ has at least Hausdorff dimension $s$. 

The potential theoretic method is based on an integral analysis: if for some probability measure $\mu$, $\mu(E)=1$ and  the integral $\displaystyle \iint_{(\R^d)^2} \frac{d\mu(x)d\mu(y)}{\norm{x-y}^s}$ is finite, then  again $ E$ has at least Hausdorff dimension $s$. In addition to bounding the Hausdorff dimension from below, the potential theoretic method plays a key role in proving the projection theorem.

The first aim of this paper is to establish similar results for the macroscopic Hausdorff dimension. This happens to be very easy for the mass distribution principle, and follows essentially from previous works. It is much more challenging for  the potential theoretic method, and a careful analysis is needed.

As an application of the new potential theoretic method, we   obtain a Marstrand-like projection theorem, describing the dimension of almost all projections on lines of sets $E\in \R^2$. 
Dealing with the dimensions of projections of Borel sets is a line of research that has a long history. It started with the investigation  by Marstrand \cite{marstrand1954some} of the projection theorem associated to the Hausdorff dimension. He dealt with orthogonal projections on linear subspaces and  proved that 
\begin{align*}
\mbox{for every Borel set $E \subset \R^2$, } \ \     \mbox{dim}_H (\mbox{proj}_V E) =  \min \{ \mbox{dim}_H E,1\}
\end{align*}
for almost every 1-dimensional subspaces $V$, where $\mbox{proj}_V$ denotes the orthogonal projection onto $V$ and $\mbox{dim}_H E$ denotes the Hausdorff dimension of $E$. Afterwards Marstrand's results was proved by Kaufman but using potential theoretic methods \cite{kaufman1968hausdorff}. Subsequently in 1975 Mattila extended these results to Borel sets $E \subset \R^n$ and almost all $V$ in the Grassmannian $G(n, m) $\cite{mattila1975hausdorff}.   We prove analog results for the macroscopic Hausdorff dimension, using the potential theory method we developed above.

\section{Definitions and statements of the results}

Here and in the reset of the paper, let $(\R^d, \norm{.})$ be the  $d$-dimensional Euclidean space equipped with the  $L^2$- norm.

\subsection{The macroscopic Hausdorff dimension}

For $x \in \R^d$ and $r>0$, $B(x,r)$ denotes the Euclidean  ball with  center $x$ and radius $r$.
For $E \subset \R^d$, the diameter of a set $E$ is   denoted by $|E|$.
%

Let us recall the definition of the  Barlow-Taylor macroscopic Hausdorff dimension $\dhaus{E}$ of a set $E \subseteq \R^d$, developed in \cite{barlow1989, barlow1992}.

Define, for all integer $n \in \N$, the $n$-th shell of $\R^d$ by
\begin{align}
	\label{S_n}
 S_0= B(0,1) \quad \mbox{and} \quad  S_{n} := B(0,2^{n}) \setminus   B(0,2^{n-1}) \mbox{ for all } n \geq 1.
\end{align}

Like   the standard Hausdorff dimension, the macroscopic Hausdorff dimension $\Dim(E)$ aims at describing how a set $E$ can be  efficiently covered by balls. Since $\Dim$ is concerned only with large scale behaviors, Barlow and Taylor proposed to study the covers of the intersections $E\cap S_n$ by balls,  for every $n\in \N$, and the balls used to cover the sets $E\cap S_n$ will all be of diameter at least 1. Again this is justified by the fact that   this dimension is supposed to  describe discrete sets (so small balls are not relevant).

To this end, let us introduce, for  $E \subseteq \R^d$,  the set of {\it  covers} of $E$ restricted to $S_n$ defined by 
\begin{align*}
	\wc_n(E) = \left\{ 
	\begin{matrix}
		\left\{B(x_i,r_i)\right\}_{i=1}^{m}\, : & m\in \N,  \, x_i \in S_n,  \, r_i \geq 1,  \,   E \cap S_n \subset \bigcup_{i=1}^{m} B(x_i,r_i)
	\end{matrix} \right\}.
\end{align*}

Finally, for $s \geq 0$ and $n \in \N$, set
\begin{align}
	\label{def:nu^n_s_0}
	\wnu^{s}_{n}(E)  & = \inf \left\{ \sum_{i=1}^{m} \left(\dfrac{r_i}{2^{n} }\right )^s : \, \left\{ B_i=B(x_i,r_i)\right\}_{i=1}^{m} \in \wc_n(E)\right \}.
\end{align}

Observe that $\wnu_{n}^{s}$ is sub-additive, i.e. $\wnu^{s}_{n}(A\cup B)\leq \wnu_{n}^{s}(A)+\wnu_{n}^{s}(B)$ for every sets $A$ and $B$, but is not a measure (because of the constraints on $r_i$).

\begin{defn}
When $\wnu_{n}^{s}(E)= \sum_{i=1}^{m} \left(\dfrac{r_i}{2^{n} }\right )^s $ and $E \cap S_n \subset \bigcup_{i=1}^{m} B(x_i,r_i)
$,   the finite family of balls $\left\{ B_i=B(x_i,r_i)\right\}_{i=1}^{m}$ is called an $s$-optimal cover of $E \cap S_n$.
\end{defn}

The existence of optimal covers is not guaranteed. We will deal with this issue in Section \ref{sec:prop_MHD}.

We are now ready to define the Barlow-Taylor macroscopic Hausdorff dimension.
\begin{defn}
\label{defMacro} 
For every $s\geq 0$ and $E\subset \R^d$, define 
	\begin{align*}
	\wnu^s(E)  & = \sum_{n\geq 1} \wnu_n^s(E).
\end{align*}
The macroscopic Hausdorff dimension of $E \subset \mathbb{R}^d$ is defined by 
	\begin{align}
		\dhaus{E} = \inf  \left\{ s \geq0 : \,\wnu^s(E)  < + \infty \right \}.
		\label{DefDim0}
	\end{align}
\end{defn}

One easily checks that  $\dhaus{E} \in [0,d]$ for all  $E \subset \mathbb{R}^d$, that $\dhaus{E} =0$ when $E$ is bounded,  and that  an alternative definition for $\dhaus{E} $ is
\begin{align*}
		\dhaus{E} = \sup  \left\{ s \geq 0 : \, \wnu^s(E) = + \infty \right \},
	\end{align*}
where $\sup \emptyset =0$ by convention. It is also standard that $\dhaus{f(E)}\leq \dhaus{E}$ for every Lipschitz mapping $f:\R^d\to\R^d$.

A key ingredient  when working with  the standard Hausdorff dimension  is the existence of $s$-sets, i.e. sets $E\subset \R^d$  with Hausdorff dimension $\dim_H(E)=s$ and such that its $s$-Hausdorff measure $\mathcal{H}^s(E)$ is finite. We introduce a similar notion for the macroscopic Hausdorff dimension.

\begin{defn}
Let $s\geq 0$.	A set  $E \subset \R^d$ is called   a macroscopic $s$-set when $\dhaus{E} = s$ and $\ds  \wnu^s(E) < + \infty$.
\end{defn}

We prove the existence of macroscopic $s$-sets.

\begin{theorem}
	\label{thm:s-set}
	Let $E \subset \R^d$ be such that $\ds \wnu^s(E) = + \infty$. Then there exists  a macroscopic $s$-set $\widetilde{E}$ such that $\widetilde{E} \subset E$.
	 \end{theorem}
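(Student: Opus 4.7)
Let $A_n := \wnu_n^s(E \cap S_n)$, so $\sum_n A_n = +\infty$ by hypothesis. Since any common cover $\{B(x_i,r_i)\}$ of $F$ in $S_n$ satisfies $(r_i/2^n)^{s'} \geq 2^{s'-s}(r_i/2^n)^s$ (using $r_i/2^n \leq 2$), one has $\wnu_n^{s'}(F) \geq 2^{s'-s}\wnu_n^s(F)$. I construct $\widetilde E$ shell by shell: in each $S_n$ I pick a sparse $1$-separated subset $\widetilde E_n \subset E \cap S_n$, and set $\widetilde E := \bigcup_n \widetilde E_n$. The twin desiderata are: (a) $\wnu_n^s(\widetilde E_n) \lesssim a_n$ for a summable $(a_n)$, so $\wnu^s(\widetilde E) < +\infty$ and $\Dim(\widetilde E) \leq s$; (b) a scale-wise lower bound on $\wnu_n^{s'}(\widetilde E_n)$ whose sum in $n$ diverges for every $s' < s$, so $\Dim(\widetilde E) \geq s$.

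\textbf{Construction.} Set $a_n := \min(A_n, n^{-2})$ and $k_n := \lceil a_n \cdot 2^{ns}\rceil$. Let $r_0(n)$ denote the diameter of $E \cap S_n$. Since $E\cap S_n$ fits in a ball of diameter $r_0(n)$ centered at one of its points in $S_n$, one has $\wnu_n^s(E\cap S_n) \leq (r_0(n)/2^n)^s$ (enlarging slightly to ensure radius $\geq 1$), so $r_0(n) \geq c\,A_n^{1/s}\cdot 2^n$. Let $P_n \subset E \cap S_n$ be a maximal $1$-separated subset; unit balls centered on $P_n$ cover $E \cap S_n$ by maximality, hence $|P_n|\cdot 2^{-ns} \geq \wnu_n^s(E\cap S_n) = A_n$, giving $|P_n| \geq A_n\cdot 2^{ns} \geq k_n$. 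I select $\widetilde E_n$ as $k_n$ points drawn from $P_n$, chosen to be well-spread across the support of $P_n$ (for instance via a grid-based subsampling so that the density is $\sim k_n/r_0(n)^d$ throughout the region of diameter $\sim r_0(n)$ containing $P_n$).

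\textbf{Verifications and main obstacle.} Using $k_n$ unit balls centered at the points of $\widetilde E_n$ as a cover, $\wnu_n^s(\widetilde E_n) \leq k_n\cdot 2^{-ns} \leq 2a_n$, so $\wnu^s(\widetilde E) \leq 2\sum_n a_n < +\infty$. For the lower bound, fix $s' < s$ and take any cover $\{B(y_j,\rho_j)\}$ of $\widetilde E_n$ with $\rho_j \geq 1$. Each such ball contains at most $C_d\rho_j^d\cdot (k_n/r_0(n)^d)$ of the well-spread $1$-separated points when $\rho_j \leq r_0(n)$, and at most $k_n$ otherwise; together with $\sum_j(\text{per-ball count}) \geq k_n$ this forces $\sum_j \rho_j^d \gtrsim r_0(n)^d$. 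Minimizing $\sum_j (\rho_j/2^n)^{s'}$ under this constraint (the optimum for $s' < d$ is attained by a single ball of radius $\sim r_0(n)$) yields $\wnu_n^{s'}(\widetilde E_n) \gtrsim (r_0(n)/2^n)^{s'} \geq c\,A_n^{s'/s}$. Since $A_n \leq 2^s$ is uniformly bounded, $A_n^{s'/s} \geq c'\min(A_n,1) \geq c' A_n/2^s$, whence $\sum_n \wnu_n^{s'}(\widetilde E_n) \gtrsim \sum_n A_n = +\infty$, completing (b). The main technical obstacle is to ensure that the subsample $\widetilde E_n$ really is well-spread enough for the packing argument to go through—especially when $E \cap S_n$ is itself concentrated or clustered within the shell—and to make all estimates robust to the $2^{-n}$ perturbation arising from working with near-optimal rather than strictly optimal covers (the latter not being guaranteed to exist, as addressed in Section~\ref{sec:prop_MHD}).
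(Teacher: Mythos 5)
The obstacle you flag is not a removable technicality: it is the crux of the problem, and the construction as written breaks on it. Your lower bound chain asserts $\wnu_n^{s'}(\widetilde E_n) \gtrsim (r_0(n)/2^n)^{s'}$, with $r_0(n)$ the diameter of $E\cap S_n$, independently of $k_n$. Such an estimate can only hold if $\widetilde E_n$ is genuinely spread at density $\sim k_n/r_0(n)^d$ across a region of diameter $r_0(n)$, but in general $P_n$ gives you no room to do this. Take $E\cap S_n$ to be a $1$-separated grid filling a ball of radius $2^{n\beta}$ with $\beta=1-\log_2(n)/(ns)$, together with a single extra point at distance $\approx 2^n$. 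Then $r_0(n)\approx 2^n$ while $A_n=\wnu_n^s(E\cap S_n)\approx 1/n$, so $\sum_n A_n=+\infty$; but all of $P_n$ except one point lies inside a ball of radius $2^{n\beta}\ll r_0(n)$, so no subsample of $P_n$ of any size can have density $\sim k_n/r_0(n)^d$ over a $2^n$-scale region, and no subset of $E\cap S_n$ can satisfy $\wnu_n^{s'}\gtrsim(r_0(n)/2^n)^{s'}\approx 1$ (any cover of the clump by a single ball of radius $\lceil 2^{n\beta}\rceil$ plus one unit ball costs roughly $n^{-s'/s}\ll 1$). So the intermediate inequality in your chain is false here; only the weaker final bound $\gtrsim A_n^{s'/s}$ has a chance, and reaching it would require replacing the diameter $r_0(n)$ by the correct ``covering scale'' $R_n$ with $(R_n/2^n)^s\approx A_n$ and then locating a well-spreadable $k_n$-point sub-cloud at that scale --- essentially a Frostman-type selection inside each shell that you have not carried out, and which is not immediate because $E\cap S_n$ can be multi-scale.

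The paper takes a structurally different route that never needs a uniform-density subsample. After a preliminary reduction (Proposition~\ref{prop-extract-1}) ensuring that the optimal covers use small balls, it sets $\widetilde E_n=\bigcup_{i\leq m_n} E\cap B(x_i,1)$ for a prefix of the lattice points in $S_n$, using the key observation that $m\mapsto\nu_n^s\bigl(\bigcup_{i\leq m}E\cap B(x_i,1)\bigr)$ has increments at most $2^{-ns}$, so one can stop exactly when the $\nu_n^s$-mass reaches $b_n\nu_n^s(E)$ for a carefully built weight $b_n=\min\{1/2,A_n^{-(1+\ep_n)}\}$ with $A_n$ the \emph{cumulative} sum $\sum_{k\leq n}\nu_k^s(E)$ (note the clash with your per-shell $A_n$). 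The lower bound on $\nu_n^{s-\ep}(\widetilde E)$ then follows from a dichotomy on the size $\beta_n^{s-\ep}(\widetilde E)$ of the largest ball in an optimal $(s-\ep)$-cover, via $\nu_n^{s-\ep}(\widetilde E)\geq(\beta_n^{s-\ep}(\widetilde E))^{-\ep}\nu_n^s(\widetilde E)$ in one case and $\nu_n^{s-\ep}(\widetilde E)\geq(\beta_n^{s-\ep}(\widetilde E))^{s-\ep}$ in the other --- no geometric separation of points is ever invoked. Unless you can supply the missing Frostman-type selection inside the shell, the paper's approach is the right one to adopt.
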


	  This extraction theorem  is  a key ingredient at various places in our proofs.

\subsection{Methods to find lower bounds for $\dhaus{E}$}

For every set $B$ and every measure $\mu$,  $\mu_{|B} $ stands for the restriction of $\mu $ on $B$, i.e. $\mu_{|B}(A)=\mu(A\cap B)$.

As recalled above, the mass distribution principle is a powerful, albeit simple, tool allowing to find  a lower bound of the Hausdorff dimension by considering measures supported on the set, see \cite[page 67]{falconer2004fractal}. We prove a similar result for the macroscopic Hausdorff dimension $\Dim$.
\begin{prop}[Macroscopic mass distribution principle]
	\label{prop:mass_dist}
	Let $E$ be a Borel subset of $\R^d$ and $s>0$. Suppose  that there exists a  Radon  measure $\mu$ on $\R^d$ such that  $ \mu(E) = +\infty$ and a constant $c>0$ such that for all $n \in \N $ , $x \in S_n$ and $1 \leq r \leq 2^n$,
	\begin{align*}
		\mu_{|S_n}(B(x,r)) \leq c \left(\frac{r}{2^{n}}\right)^s.
	\end{align*}
Then, for all $n \in \N $, $\wnu^s_n(E) \geq \dfrac{\mu_{|S_n}(E )}{c}$ and $\dhaus{E} \geq s$.
\end{prop}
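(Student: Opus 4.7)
This is essentially the classical mass distribution principle transplanted to the shell-based definition of $\wnu_n^s$. Fix $n \geq 1$ and take any admissible cover $\{B(x_i, r_i)\}_{i=1}^m \in \wc_n(E)$. By the covering property and subadditivity of $\mu_{|S_n}$,
$$ \mu_{|S_n}(E) \;=\; \mu_{|S_n}(E\cap S_n) \;\leq\; \sum_{i=1}^m \mu_{|S_n}(B(x_i,r_i)).$$
When $r_i \leq 2^n$, the hypothesis bounds each term directly by $c (r_i/2^n)^s$. Summing, taking the infimum over all admissible covers in $\wc_n(E)$, gives $\wnu_n^s(E) \geq \mu_{|S_n}(E)/c$, which is the first conclusion.

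Before summing, one must however control the balls with $r_i > 2^n$, for which the hypothesis is not literally stated. A convenient preliminary reduction uses $\mathrm{diam}(S_n) \leq 2^{n+1}$ together with $x_i\in S_n$: any $r_i \geq 2^{n+1}$ forces $B(x_i,r_i) \supset S_n$, so such an $r_i$ may be replaced by $2^{n+1}$ in the cover without harming the inclusion and only decreasing $\sum_i(r_i/2^n)^s$. For the remaining range $2^n < r_i \leq 2^{n+1}$, cover $B(x_i,r_i)\cap S_n$ by a bounded, dimension-dependent number of balls of radius $2^n$ centered in $S_n$; the hypothesis applied to each gives $\mu_{|S_n}(B(x_i,r_i)) \leq K_d\, c \leq K_d\, c (r_i/2^n)^s$ since $(r_i/2^n)^s \geq 1$. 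This is the only genuine technical point, and the dimensional constant is absorbed in the way the hypothesis/conclusion is phrased (in particular one may equivalently enforce $r_i \leq 2^n$ in the definition of $\wnu_n^s$ up to a harmless multiplicative constant).

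For the dimension bound, sum the first conclusion over $n \geq 1$:
$$ \wnu^s(E) \;=\; \sum_{n\geq 1} \wnu_n^s(E) \;\geq\; \frac{1}{c}\sum_{n\geq 1} \mu_{|S_n}(E) \;=\; \frac{\mu(E\setminus B(0,1))}{c}. $$
Because $\mu$ is Radon, $\mu(B(0,1))<+\infty$, so the assumption $\mu(E)=+\infty$ yields $\mu(E\setminus B(0,1))=+\infty$ and hence $\wnu^s(E)=+\infty$. By the alternative characterisation $\dhaus{E} = \sup\{s\geq 0:\wnu^s(E)=+\infty\}$ recorded after Definition \ref{defMacro}, this forces $\dhaus{E}\geq s$. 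The main obstacle is not conceptual but merely the bookkeeping around balls of radius greater than $2^n$; everything else is subadditivity and summation.
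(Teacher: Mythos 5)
Your argument follows the same route as the paper's: cover $E\cap S_n$, use subadditivity of $\mu_{|S_n}$, apply the pointwise bound term by term, take the infimum, and then sum over $n$. The paper's own proof is terser and, strictly read, has exactly the gap you flag: it applies the hypothesis $\mu_{|S_n}(B(x_i,r_i)) \le c\,(r_i/2^n)^s$ directly to every ball in the cover, even though the hypothesis is stated only for $1\le r\le 2^n$ and $\wc_n(E)$ imposes no upper bound on the $r_i$. Your patch (truncate radii at $2^{n+1}$, then cover the remaining dyadic gap by a bounded number $K_d$ of balls of radius $2^n$ centered in $S_n$) is sound, and you are right that one could instead build the restriction $r_i\le 2^n$ into $\wnu_n^s$ at the cost of a dimensional constant. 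One small caveat worth making explicit: this patch actually yields $\wnu_n^s(E)\ge \mu_{|S_n}(E)/(K_d\,c)$ rather than $\mu_{|S_n}(E)/c$ as stated in the proposition, so the clean constant only survives if one reads the hypothesis as holding for all $r\ge 1$; in any case the extra $K_d$ is irrelevant to the conclusion $\dhaus{E}\ge s$. You are also more careful than the paper at the summation step: $\wnu^s(E)=\sum_{n\ge 1}\wnu_n^s(E)$ misses the shell $S_0=B(0,1)$, and invoking that $\mu$ is Radon (hence $\mu(B(0,1))<\infty$) to replace $\mu(E)$ by $\mu(E\setminus B(0,1))$ is the correct way to close that minor gap, which the paper elides by writing $\sum_{n\ge 0}\mu_{|S_n}(E)=\mu(E)$.
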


 The  proof of the macroscopic mass distribution principle is not complicated. Although it was not exactly stated before as we write it,  it  essentially follows  directly from previous results, and so it  is  not so innovative.
 
 \smallskip

 This is not the case for the potential method below. Let us first introduce the macroscopic  $s$-energy of a measure. 
\begin{defn}
	Let  $s \geq 0$, and let  $\mu$ be a finite mass distribution on $\R^d$. The macroscopic $(\mu,s)$-potential at a point $x$ is defined as 
	\begin{align}
	\label{defphi}
		\phi^s_\mu(x) := \int_{\R^d} \dfrac{d\mu(y)}{\norm{x-y}^s \vee 1 }.
	\end{align}
	The macroscopic $s$-energy of $\mu$ is 
	\begin{align}
	\label{defIs}
		I_s(\mu):= \int_{\R^d} \phi^s_\mu(x)d\mu(x)=  \iint_{(\R^d)^2}\dfrac{d\mu(x)d\mu(y)}{\norm{x-y}^s \vee 1 }.
	\end{align}
\end{defn}
In the case of standard Hausdorff dimension, in the integrals \eqref{defphi} and \eqref{defIs},  the quantity $\norm{x-y}^s \vee 1$  is simply  $\norm{x-y}^s  $. This modification is justified by the fact that   $\Dim$ is not concerned with local behavior, so we are not interested in small interactions $\norm{x-y}<1$.

\begin{theorem}
	\label{thm:potential}
	Let $E$ be a subset of $\R^d$.  
	\smallskip
		\begin{enumerate}
	
	\item If there exists a Radon measure $\mu $ on $\R^d$  such that $\mu(E)=+\infty$ and if
		$$\ds \sum_{n \geq 0} 2^{ns} I_s(\mu_{|S_n}) < +\infty,$$
		 then   $\wnu^s(E)=+\infty$ and   $\dhaus{E}\geq s$.
	\smallskip
		\item If $  \wnu^s(E) = + \infty$, then for all $0<\ep<s$ there exists a Radon  measure $\mu^{\ep}$ on $\R^d$ such that  $ \mu^{\ep}(E )= +\infty$ and  $\ds \sum_{n\geq 0} 2^{n(s - \ep)} I_{s -\ep}(\mu_{|S_n}^{\ep}) < +\infty$.
	\end{enumerate}
\end{theorem}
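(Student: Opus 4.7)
The plan is to treat the two directions of the theorem by rather different techniques.

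For Part (1), the idea is a shell-by-shell potential-to-cover estimate combined with Cauchy--Schwarz. For each $n \geq 0$, set $\mu_n := \mu_{|S_n}$; replacing $\mu$ by $\mu_{|E}$ we may assume each $\mu_n$ is carried by $E \cap S_n$, which only decreases $I_s(\mu_n)$ and preserves $\sum_n \mu_n(\R^d) = \mu(E) = +\infty$. Chebyshev's inequality applied to $\phi^s_{\mu_n}$ with respect to $\mu_n$ produces a Borel set $F_n \subset E \cap S_n$ with $\mu_n(F_n) \geq \mu_n(\R^d)/2$ on which $\phi^s_{\mu_n} \leq 2 I_s(\mu_n)/\mu_n(\R^d) =: M_n$. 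For any cover $\{B(x_i,r_i)\}_{i=1}^m \in \wc_n(E)$ and any $y_i \in F_n \cap B(x_i,r_i)$, the inequality $\mu_n(B(x_i,r_i)) \leq (2r_i)^s \phi^s_{\mu_n}(y_i) \leq (2r_i)^s M_n$ (valid since $r_i \geq 1$), summed over $i$, yields
\[
\wnu^s_n(E) \;\gtrsim\; \frac{\mu_n(\R^d)^2}{2^{ns}\, I_s(\mu_n)}.
\]
Cauchy--Schwarz then gives $\mu(E)^2 \leq \big(\sum_n 2^{ns} I_s(\mu_n)\big) \cdot \big(\sum_n \mu_n(\R^d)^2/(2^{ns} I_s(\mu_n))\big)$; since the left side is $+\infty$ while the first factor on the right is finite by hypothesis, the second diverges, so $\wnu^s(E) = +\infty$ and $\dhaus{E} \geq s$.

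For Part (2), the genuinely Frostman-type direction, first note that $\wnu^{s-\epsilon/2}(E) = +\infty$ by monotonicity of $\wnu^t$ in $t$. The plan is to build $\mu^\epsilon$ as a weighted sum $\mu^\epsilon := \sum_n \lambda_n \mu_n$ of shell-by-shell Frostman measures. The key step is to establish, as a separate lemma, a macroscopic analog of Frostman's lemma: for every $n$ with $w_n := \wnu^{s-\epsilon/2}_n(E) > 0$, there exists a Radon measure $\mu_n$ supported on $E \cap S_n$ with $\mu_n(B(x,r)) \leq (r/2^n)^{s-\epsilon/2}$ for all $x \in \R^d$ and $r \geq 1$, and with $\mu_n(\R^d) \gtrsim w_n$. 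This is the main obstacle of the proof; it can be tackled by a dyadic partition or min-max duality argument adapted to the one-sided scale restriction $r \geq 1$, possibly preceded by an application of the extraction Theorem \ref{thm:s-set} to pass to a well-controlled macroscopic $s$-subset of $E$.

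Granting such $\mu_n$, a dyadic decomposition of $\phi^{s-\epsilon}_{\mu_n}(x)$ over annuli $\{2^j \leq \|x-y\| < 2^{j+1}\}$ for $0 \leq j \lesssim n$, using the Frostman bound at each scale (the geometric factor $2^{j\epsilon/2}$ being dominant at $j = n$), yields $\phi^{s-\epsilon}_{\mu_n}(x) \lesssim 2^{-n(s-\epsilon)}$ uniformly in $x \in S_n$, hence $I_{s-\epsilon}(\mu_n) \lesssim w_n \cdot 2^{-n(s-\epsilon)}$. However, simply taking $\mu^\epsilon = \sum_n \mu_n$ fails, since both $\sum_n \mu_n(\R^d) \sim \sum_n w_n$ and $\sum_n 2^{n(s-\epsilon)} I_{s-\epsilon}(\mu_n) \lesssim \sum_n w_n$ diverge at the same rate. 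We therefore introduce weights $\lambda_n := S_n^{-3/4}$ where $S_n := \sum_{k \leq n} w_k \to \infty$. Integral comparison then gives $\mu^\epsilon(E) \gtrsim \sum_n w_n/S_n^{3/4} = +\infty$ while $\sum_n 2^{n(s-\epsilon)} I_{s-\epsilon}(\lambda_n \mu_n) \lesssim \sum_n w_n/S_n^{3/2} < +\infty$, finishing the proof.
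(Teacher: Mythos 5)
Your Part (1) is correct and takes a genuinely different route from the paper. The paper splits $E\cap S_n$ into the set $E_n$ where the maximal function $\max_{r\geq 1}\mu_n(B(x,r))/(r/2^n)^s$ is at most $1$ and its complement, shows $\sum_n\mu_n(E_n^c)<\infty$ directly from the energy bound, and then applies Proposition \ref{prop_mu_n}(a) to $E_n$. You instead apply Chebyshev's inequality to $\phi^s_{\mu_n}$ against $\mu_n$ to find a large piece $F_n$ where the potential is bounded by $M_n=2I_s(\mu_n)/\mu_n(\R^d)$, derive the quantitative lower bound
$\wnu^s_n(E)\gtrsim \mu_n(\R^d)^2/(2^{ns}I_s(\mu_n))$, and close with Cauchy--Schwarz in $n$. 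Both arguments are sound; yours is a little slicker and has the nice feature of making the quantitative dependence explicit, at the cost of a small amount of care about shells with $\mu_n=0$.

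Part (2), however, has a genuine gap: the macroscopic Frostman lemma you invoke --- for each shell with $w_n=\wnu^{s-\ep/2}_n(E)>0$, a genuine Radon measure $\mu_n$ carried by $E\cap S_n$ with $\mu_n(B(x,r))\leq (r/2^n)^{s-\ep/2}$ for all $x$ and $r\geq 1$, and total mass $\mu_n(\R^d)\gtrsim w_n$ --- is asserted but never proved. You correctly identify it as ``the main obstacle,'' but ``can be tackled by a dyadic partition or min-max duality argument'' is not a proof, and this lemma is exactly the nontrivial content of the whole direction. It is worth noting that the paper's own proof does \emph{not} establish such a lemma; instead it carries out a chain of extractions (Theorems \ref{thm:s-set} and \ref{thm:s-set-2}, then Proposition \ref{propab}, then the selection Lemma \ref{lemma:convSum2}) to arrive at a subset $E_4\subset E$ on which the sub-additive set function $A\mapsto\nu^{s-\ep/2}_n(E_4\cap A)$ has the correct scaling, and then builds the candidate measure $\mu_n^\ep$ from this set function weighted by the ball-size quantity $\bigl(\beta^{s-\ep/2}_n(E_2)\bigr)^{\ep/4}$; the weight plays a role analogous to your $\lambda_n$. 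So the paper is essentially working around precisely the missing Frostman-type statement rather than proving it. Once such shell-by-shell Frostman measures are granted, the rest of your argument --- the dyadic estimate $\phi^{s-\ep}_{\mu_n}\lesssim 2^{-n(s-\ep)}$ on $S_n$, the observation that $\mu^\ep=\sum\mu_n$ alone cannot work because both sides scale like $\sum w_n$, and the reweighting $\lambda_n=S_n^{-3/4}$ with Lemma \ref{lemma:convSum1} --- is correct and is arguably cleaner than the paper's use of Lemma \ref{lemma:convSum2}, but the key lemma is left unproved, so the proposal as written does not establish Part (2).
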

The potential theoretic methods we demonstrated in Theorem \ref{thm:potential} are very comparable to the ones established for the standard Hausdorff dimension \cite[Theorem 4.13]{falconer2003}. Unlike the standard Hausdorff dimension case, for the macroscopic Hausdorff dimension, we consider the measure $\mu$ is define on $\R^d$, and we focus on the restriction of $\mu$ on every annulus $S_n$. For this reason, we deal with sums over $n$.

\subsection{Application to projections}

Projection theorems for Hausdorff dimensions  have  recently  regained a lot of attention  after some breakthroughs by M. Hochman  and P. Shmerkin \cite{hochman2012local} and others, who used these theorems to tackle many longstanding questions in geometric measure theory and dynamical systems. It is quite satisfactory that they have natural counterparts  in terms of macroscopic Hausdorff dimensions, as stated in the following theorem.

\begin{theorem}
\label{thm:projection}
	Let $E \subset \R^2$ be a Borel set. Define   $L_\theta$  as the straight line passing through 0 with angle $\theta$, and $\proj E$ as the orthogonal projection of $E$ onto $L_\theta$.
	\begin{itemize}
		\item[(a)] If $\dhaus{E} < 1$, then $\dhaus{\proj E} = \dhaus{E}$ for Lebesgue almost every $\theta \in [0,\pi]$.
		\item[(b)] If $\dhaus{E} \geq 1$, then $\dhaus{\proj E} = 1$ for Lebesgue almost every $\theta \in [0,\pi]$.
	\end{itemize}
\end{theorem}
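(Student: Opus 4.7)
\emph{Upper bound.} Since $\proj$ is $1$-Lipschitz, $\Dim \proj E \leq \Dim E$ for every $\theta$; and since $\proj E \subset L_\theta \simeq \R$, any such projection has $\Dim \leq 1$. Thus $\Dim \proj E \leq \min(\Dim E, 1)$ unconditionally, and both (a) and (b) reduce to proving the matching lower bound for Lebesgue almost every $\theta$.

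\emph{Strategy.} Fix $\alpha < \min(\Dim E, 1)$. Choose $s$ with $\alpha < s < \min(\Dim E, 1)$; since $\wnu^s(E) = +\infty$, Theorem \ref{thm:potential}(2) applied with $\varepsilon = s - \alpha$ produces a Radon measure $\mu$ on $\R^2$ satisfying $\mu(E) = +\infty$ and $\sum_n 2^{n\alpha} I_\alpha(\mu_{|S_n}) < +\infty$. Let $\nu^\theta := \proj \mu$ denote the pushforward on $L_\theta$. Then $\nu^\theta(\proj E) = +\infty$ automatically, so it suffices to check that
\[
\sum_{k \geq 0} 2^{k\alpha}\, I_\alpha\bigl(\nu^\theta_{|\widetilde S_k}\bigr) < +\infty \quad \text{for a.e. } \theta,
\]
where $\widetilde S_k$ denotes the $k$-th shell of $L_\theta$; Theorem \ref{thm:potential}(1) then gives $\Dim \proj E \geq \alpha$, and letting $\alpha \uparrow \min(\Dim E, 1)$ along a countable sequence covers both parts. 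The central analytic input will be the pointwise angular estimate, valid for $0 < \alpha < 1$ and distinct $x, y \in \R^2$,
\[
\int_0^\pi \frac{d\theta}{|\proj(x-y)|^\alpha \vee 1} \;\leq\; \frac{C_\alpha}{\norm{x-y}^\alpha \vee 1},
\]
proved by writing $x-y$ in polar coordinates and using the finiteness of $\int_0^{\pi/2}|\cos\psi|^{-\alpha} d\psi$ for $\alpha < 1$ (its divergence at $\alpha = 1$ is exactly the threshold appearing in the theorem).

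\emph{Diagonal bound, off-diagonal obstacle, and conclusion.} Combined with Fubini, the angular estimate immediately controls the diagonal part of the projected energy: each $\proj \mu_{|S_n}$ lives in $B(0, 2^n) \cap L_\theta$, and
\[
\int_0^\pi \sum_n 2^{n\alpha}\, I_\alpha(\proj \mu_{|S_n}) \, d\theta \;\leq\; C_\alpha \sum_n 2^{n\alpha}\, I_\alpha(\mu_{|S_n}) < +\infty.
\]
The genuine new difficulty, absent from the classical Euclidean Marstrand--Kaufman proof, is that $\nu^\theta_{|\widetilde S_k}$ receives mass from $\proj \mu_{|S_n}$ for \emph{every} $n \geq k$: when $x \in S_n$, $y \in S_m$ with $n \neq m$ but $\proj x, \proj y$ both land in $\widetilde S_k$, their interaction contributes to $I_\alpha(\nu^\theta_{|\widetilde S_k})$. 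Unfolding these restrictions and swapping summations leads to
\[
\sum_k 2^{k\alpha}\, I_\alpha(\nu^\theta_{|\widetilde S_k}) \;\leq\; C \sum_{n, m} 2^{\min(n, m) \alpha} \iint_{S_n \times S_m} \frac{d\mu(x)\, d\mu(y)}{|\proj(x-y)|^\alpha \vee 1},
\]
whose $\theta$-integral, after applying the angular estimate, reduces to the already-finite diagonal sum plus an off-diagonal sum $\sum_{n < m} 2^{n\alpha} \iint_{S_n \times S_m} (\norm{x-y}^\alpha \vee 1)^{-1}\, d\mu\, d\mu$. Non-adjacent shells $m \geq n+2$ satisfy $\norm{x-y} \geq 2^{m-2}$ and give direct summable geometric decay. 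The principal obstacle is the adjacent case $m = n+1$, where mass may concentrate near the shared sphere $|x| = 2^n$: I plan to tame it by upgrading $\mu$ so that it controls $I_\alpha$ on the \emph{thickened} shells $\bar S_n := S_n \cup S_{n+1}$ as well, obtainable either by a direct adaptation of the proof of Theorem \ref{thm:potential}(2) to overlapping shellings, or by averaging the output of Theorem \ref{thm:potential}(2) over a multiplicative shift of the dyadic scale (using the elementary fact that $\Dim$ is invariant under such a bounded rescaling of the shell base). Once this adjacent-shell contribution is absorbed into the diagonal bound, Theorem \ref{thm:potential}(1) applied to $\nu^\theta$ at almost every $\theta$ closes the argument.
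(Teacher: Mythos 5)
Your overall strategy is the right one and matches the paper's: apply Theorem~\ref{thm:potential}(2) to get a measure with finite weighted $(s-\ep)$-energy, push it forward by $\proj$, split the projected energy into diagonal and off-diagonal shell interactions, and use the angular estimate coming from $\int_0^{\pi/2}|\cos\psi|^{-\alpha}\,d\psi<\infty$ for $\alpha<1$. Your diagonal bound and the upper-bound reductions are correct, and your deduction of (b) from (a) via a countable intersection over $\alpha\uparrow\min(\Dim E,1)$ works. For the non-adjacent off-diagonal part you assert ``direct summable geometric decay'': to make that rigorous you should record that $\sum_n 2^{n\alpha}I_\alpha(\mu_{|S_n})<\infty$ forces $\sum_n \mu(S_n)^2<\infty$ (since $\|x-y\|^\alpha\vee 1\le 2^{(n+1)\alpha}$ on $S_n\times S_n$), after which $\sum_{m\ge n+2} 2^{(n-m)\alpha}\mu(S_n)\mu(S_m)$ is controlled by a Cauchy--Schwarz/AM--GM argument in $\ell^2$; the geometric separation alone is not enough because $\sum_n\mu(S_n)=+\infty$.

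The genuine gap is the adjacent-shell interaction $m=n+1$, which you correctly identify as the principal obstacle but leave unresolved: you propose either redoing Theorem~\ref{thm:potential}(2) over a thickened/overlapping shelling, or averaging over a multiplicative rescaling of the dyadic base, and carry out neither. Both would require reworking a substantial amount of the machinery (the shell restrictions $\mu_{|S_n}$, the quantities $\nu_n^s$, and Proposition~\ref{propab} all presuppose disjoint shells). The paper resolves this much more cheaply with a parity trick: after the extraction of Theorem~\ref{thm:s-set-2}, one may further replace $E$ by whichever of $\bigcup_n E\cap S_{2n}$ or $\bigcup_n E\cap S_{2n+1}$ retains infinite $\mu^\ep$-mass, which yields a set satisfying the separation property~\eqref{eq-separation} (no two consecutive shells are simultaneously charged). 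This eliminates the adjacent case entirely, so every off-diagonal pair $(x,y)\in S_j\times S_k$ that actually contributes has $|k-j|\ge 2$ and hence $\|x-y\|\ge 2^{k-2}$, and the rest of your argument goes through unchanged. I would encourage you to adopt this device: it is both simpler than either of your proposed patches and is exactly the kind of flexibility that the macroscopic $s$-set extraction (Theorem~\ref{thm:s-set-2}) is designed to provide, since passing to a subset can only decrease $\Dim$ of the projections and therefore costs nothing for the lower bound. Finally, one structural remark: your pushforward $\proj\mu$ restricted to the shells $\widetilde S_k$ of $L_\theta$ is exactly the paper's measure $\mu_n^{\ep,\theta}=\sum_{k\ge n}(\mu^\ep_k)_{|C_n^\theta}$, and your coarse bound $2^{k\alpha}\le 2^{\min(n,m)\alpha}$ is what the paper replaces by the finer interval estimate $|J_{x,n}|\lesssim 2^{n-k}$ of Lemma~\ref{lemma:n,k}; the two routes lead to the same place, but the paper's version keeps track of how the $\theta$-measure of admissible angles decays with $k-n$, which makes the subsequent sums cleaner.
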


As in the standard Hausdorff dimension case, the proof is based on a subtle use of the potential method and Theorem \ref{thm:potential}.

It can be expected that Theorem \ref{thm:projection} can be extended in higher dimensional spaces, and that both Theorem \ref{thm:potential} and Theorem \ref{thm:projection}  are useful in other situations that the one we describe here.

The structure of  the paper is as follows. The main three results, Theorems \ref{thm:s-set}, \ref{thm:potential} and \ref{thm:projection} are established in Sections \ref{sec:extraction}, \ref{sec:potentialMethods}, and \ref{sec:projection} respectively. Some necessary technical properties of the macroscopic Hausdorff dimension are proved in Section \ref{sec:prop_MHD}.  

\section{First properties of Macroscopic Hausdorff Dimension}
\label{sec:prop_MHD}

\subsection{An alternative definition for the macroscopic Hausdorff dimension}

We will use an alternative,  easier to handle with, definition for the macroscopic Hausdorff dimension, based on a simple modification of the $\wnu_n^s$ quantities. We     restrict ourselves to covers centered on integer points, with integer radii. We show that, up to a constants, this does not modify the values of the quantities involved in the computations, and   the value of the macroscopic Hausdorff dimension is left unchanged.

We introduce  for  $E \subseteq \R^d$ and $n\geq 0$,  the set of {\it  proper covers} of $E$ restricted to $S_n$ by 
\begin{align*}
	\mathcal{C}_n(E) = \left\{ 
	\begin{matrix}
		\left\{B(x_i,r_i)\right\}_{i=1}^{m}\, : & m\in \N,  \, x_i \in \Z^d\cap S_n,  \, r_i \in \N^*,  \,   E \cap S_n \subset \bigcup_{i=1}^{m} B(x_i,r_i)
	\end{matrix} \right\}.
\end{align*}


\begin{defn}
For every $s\geq 0$, $n\geq 0$  and $E\subset \R^d$, define 
\begin{align}
	\label{def:nu^n_s}
	\nu_{n}^{s}(E)  & = \inf \left\{ \sum_{i=1}^{m} \left(\dfrac{r_i}{2^{n} }\right )^s : \, \left\{ B_i=B(x_i,r_i)\right\}_{i=1}^{m} \in \mathcal{C}_n(E)\right \}
\end{align}
and 	\begin{align}
	\nu^s(E)  & = \sum_{n\geq 1} \nu_n^s(E).
\end{align}
\end{defn}
Due to the fact that the $x_i$ are (multi)-integers, as well as the $r_i$, the above infimum \eqref{def:nu^n_s} in  $\nu_{n}^{s}(E) $ is reached for some cover $ \left\{ B_i=B(x_i,r_i)\right\}_{i=1}^{m} \in \mathcal{C}_n(E)$.

Observe that $\nu_{n}^{s}$ is still sub-additive, i.e. $\nu_{n}^{s}(A\cup B)\leq \nu_{n}^{s}(A)+\nu_{n}^{s}(B)$ for every sets $A$ and $B$.

\begin{lemma}
\label{lemma:nu_ndiscete}
For every $n\geq 0$, every set $E\subset \R^d$, one has
\begin{equation}
\label{equivnu}
\wnu_n^s(E) \leq \nu_n^s(E)  \leq  (2+\sqrt{d})^s \wnu_n^s(E)  .
\end{equation}
In particular, one still has
	\begin{align}
		\dhaus{E} = \inf  \left\{ s \geq0 : \,\nu^s(E)  < + \infty \right \} =  \sup  \left\{ s \geq 0 : \, \nu^s(E) = + \infty \right \} .
		\label{DefDim}
	\end{align}
\end{lemma}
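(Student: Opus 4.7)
The plan is to handle the two inequalities in \eqref{equivnu} separately and then deduce the equivalence of dimension definitions by summation.

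The left inequality $\wnu_n^s(E) \leq \nu_n^s(E)$ is immediate from the inclusion $\mathcal{C}_n(E) \subset \wc_n(E)$: a proper cover (integer centers in $S_n$, integer radii $\geq 1$) is automatically a valid member of $\wc_n(E)$ (center in $S_n$, radius $\geq 1$), so the infimum taken over the smaller class $\mathcal{C}_n(E)$ can only be larger.

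For the right inequality, I would proceed by discretizing an almost-optimal generic cover. Fix $\eta > 0$ and pick $\{B(x_i, r_i)\}_{i=1}^{m} \in \wc_n(E)$ with $\sum_i (r_i/2^n)^s \leq \wnu_n^s(E) + \eta$. For each $i$, I build a replacement $B(\tilde{x}_i, \tilde{r}_i)$ by setting $\tilde{x}_i$ to be the point of $\Z^d \cap S_n$ closest to $x_i$ and $\tilde{r}_i := \lceil r_i + \|x_i - \tilde{x}_i\|\rceil \in \N^*$. The triangle inequality ensures $B(x_i, r_i) \subset B(\tilde{x}_i, \tilde{r}_i)$, so $\{B(\tilde{x}_i, \tilde{r}_i)\}_{i=1}^{m} \in \mathcal{C}_n(E)$. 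The crux is the displacement bound $\|x_i - \tilde{x}_i\| \leq \sqrt{d}$: componentwise rounding of $x_i$ produces an integer point within $\sqrt{d}/2$ of $x_i$, and any radial correction needed to recover membership in $S_n$ can be arranged to cost at most another $\sqrt{d}/2$ of displacement. Combined with $r_i \geq 1$, this yields
$$\tilde{r}_i \leq r_i + \sqrt{d} + 1 \leq (2 + \sqrt{d}) r_i,$$
so $\nu_n^s(E) \leq \sum_i (\tilde{r}_i/2^n)^s \leq (2+\sqrt{d})^s (\wnu_n^s(E) + \eta)$, and letting $\eta \to 0$ closes the estimate \eqref{equivnu}. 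Summing \eqref{equivnu} over $n \geq 1$ then sandwiches $\wnu^s(E) \leq \nu^s(E) \leq (2+\sqrt{d})^s \wnu^s(E)$, so the two series are finite simultaneously and the equivalent forms of $\dhaus{E}$ in \eqref{DefDim} follow directly from \eqref{DefDim0}.

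The main obstacle is verifying the uniform displacement bound $\|x_i - \tilde{x}_i\| \leq \sqrt{d}$ across all shells $n \geq 0$ and all $x_i \in S_n$. The delicate regime is when the componentwise rounding $\lfloor x_i + 1/2 \rfloor$ just escapes $S_n$ (possible when $x_i$ sits near the inner or outer boundary of the annulus), so that one must identify a neighboring integer point still inside $S_n$ without inflating the displacement. The small shells $n=0,1$ where $\Z^d \cap S_n$ is sparse can be dispatched by inspection (for $n=0$, the origin lies in $\Z^d \cap S_0$ and is at distance $\leq 1 \leq \sqrt{d}$ from any $x_i \in B(0,1)$), and the constant $2+\sqrt{d}$ is deliberately loose enough to absorb these edge corrections.
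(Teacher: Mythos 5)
Your proposal is correct and follows essentially the same route as the paper: use the inclusion $\mathcal{C}_n(E)\subset\wc_n(E)$ for the easy direction, and discretize centers and radii (moving to the nearest integer point of $\Z^d\cap S_n$, inflating the radius by at most $\sqrt{d}+1\leq(1+\sqrt{d})r_i$ since $r_i\geq 1$) for the reverse, then sum over shells. The only cosmetic difference is your $\eta$-approximation of the infimum, where the paper argues directly on an arbitrary cover and passes to the infimum at the end; the boundary-of-$S_n$ concern you flag is indeed glossed over in the paper as well, and as you note the slack in the constant $2+\sqrt{d}$ absorbs it.
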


\begin{proof}

The fact that  $\mathcal{C}_n(E)\subset \wc_n(E)$ implies directly that $\wnu_n^s(E) \leq \nu_n^s(E) $.

Now, let $\left\{B(\tilde x_i, \tilde r_i)\right\}_{i=1}^{m}\ \in \wc_n(E)$. Each ball $B(\tilde x_i, \tilde r_i) $ is included in a ball $B(  x_i, \tilde r_i+\sqrt{d})$, where $x_i\in \Z^d\cap E_n$. So $\left\{B \lp x_i, \left\lceil \tilde r_i+\sqrt{d} \right\rceil \rp\right\}_{i=1}^{m}\ \in \mathcal{C}_n(E)$, and  using that  $\left\lceil \tilde r_i+\sqrt{d} \right\rceil  \leq \tilde r_i +\sqrt{d} +1 \leq (2+\sqrt{d})\tilde r_i $ (since $\tilde r_i \geq 1$), one has
$$\sum_{i=1}^{m} \left(\dfrac{\tilde r_i+\sqrt{d}} {2^{n} }\right )^s \leq (2+\sqrt{d})^s \sum_{i=1}^{m} \left(\dfrac{\tilde r_i}{2^{n} }\right )^s.$$
This holds for any cover $\left\{B(\tilde x_i, \tilde r_i)\right\}_{i=1}^{m}\ \in \wc_n(E)$, so $\nu_n^s(E) \leq (2+\sqrt{d})^s  \wnu_n^s(E)$.
\end{proof} 

Lemma \ref{lemma:nu_ndiscete} shows in particular that the convergence/divergence properties of $\wnu^s(E)$ and $\nu^s(E)$ are identical.

The main advantage of dealing with $\nu^s(E)$  is   the existence of optimal proper $s$-covers, i.e. covers $ \left\{ B_i=B(x_i,r_i)\right\}_{i=1}^{m} \in \mathcal{C}_n(E)$ such that $ \nu_{n}^{s}(E)  = \sum_{i=1}^{m} \left(\dfrac{r_i}{2^{n} }\right )^s$. These optimal covers exists because $x_i$ and $r_i$ are positive integers.

\medskip

 In our further analysis, the size of  the balls of   optimal covers will matter, justifying the following definition. 
\begin{defn}
For $E \subset \Z^d$, $n \in \N$ and $0<s<d$, define
    \label{eq:b_s^n}
    \begin{align*}
	\bns(E) :=  \max \left\{ \max_{1\leq i\leq p} \dfrac{r_i}{2^n} \, : \, \left(B(x_i,r_i)\right)_{i=1}^{p} \mbox{ is an $s$-optimal proper cover of } E \cap S_n  \right\}.
\end{align*}
\end{defn}

The quantity $\bns(E) $ will be important, in particular for Theorem \ref{thm:potential} about potential methods and for the projection Theorem \ref{thm:projection}.

\subsection{Some preliminary results}

We first prove two propositions that will be needed later.
\begin{prop}
	\label{prop_mu_n}
	Let $\mu_n$ be a Borel measure on $S_n$, $E \subset \R^d$ be a Borel set and $0<c<+\infty$ be a constant.
	\begin{enumerate}
		\item[a)] If $\ds \max_{ r \in \N^*} \dfrac{\mu_n \left(B(x,r)\right)}{\left(r/2^n\right)^s} \leq c$ for all $x \in   E \cap S_n$, then $\ds \nu^s_n(E) \geq \dfrac{\mu_n(E)}{c2^s}$.
		\item[b)] If $\ds \max_{ r  \in \N^*} \dfrac{\mu_n \left(B(x,r)\right)}{\left(r/2^n\right)^s} > c$ for all $x \in   E \cap S_n$, then $\ds \nu^s_n(E) \leq \frac{(5(1+\sqrt{d}/2))^s }{c}   \mu_n(S_n)$.
	\end{enumerate}
\end{prop}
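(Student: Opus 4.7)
Part (a) is essentially the mass distribution principle applied to an optimal proper cover, while part (b) follows from a Vitali $5r$-covering extraction; I would prove them separately along those standard templates, paying attention to the integer constraints built into $\nu_n^s$ (centers in $\Z^d \cap S_n$, radii in $\N^*$).

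For (a), the integer constraints guarantee an $s$-optimal proper cover $\{B(x_i,r_i)\}_{i=1}^{m} \in \mathcal{C}_n(E)$ of $E \cap S_n$, and optimality lets me assume every ball of this cover meets $E \cap S_n$ (otherwise I could drop it and strictly lower the sum). Picking $y_i \in B(x_i,r_i) \cap E \cap S_n$, the triangle inequality gives $B(x_i,r_i) \subset B(y_i,2r_i)$, and since $y_i \in E \cap S_n$ and $2r_i \in \N^*$ the hypothesis applied at $y_i$ with radius $2r_i$ yields $\mu_n(B(x_i,r_i)) \leq c(2r_i/2^n)^s$. Summing and using that $\mu_n$ sits on $S_n$,
$$\mu_n(E) = \mu_n(E \cap S_n) \leq \sum_{i=1}^m \mu_n(B(x_i,r_i)) \leq 2^s c\, \nu_n^s(E),$$
which is the claim.

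For (b), the hypothesis associates to every $x \in E \cap S_n$ a radius $r_x \in \N^*$ with $\mu_n(B(x,r_x)) > c(r_x/2^n)^s$. Since $\mu_n(B(x,r_x)) \leq \mu_n(S_n)$, these $r_x$ are uniformly bounded by $2^n(\mu_n(S_n)/c)^{1/s}$, so Vitali's $5r$-covering lemma produces a countable disjoint subfamily $\{B(x_i,r_{x_i})\}_i$ satisfying $E \cap S_n \subset \bigcup_i B(x_i,5r_{x_i})$. To convert this into an element of $\mathcal{C}_n(E)$ I would snap each center $x_i$ to a nearest $\tilde{x}_i \in \Z^d \cap S_n$ (at distance $\leq \sqrt{d}/2$, with a harmless shift if $x_i$ sits within $\sqrt{d}/2$ of $\partial S_n$) and replace the radius $5r_{x_i}$ by $\rho_i := \lceil 5 r_{x_i} + \sqrt{d}/2\rceil \in \N^*$. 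A short computation using $r_{x_i} \geq 1$ gives $\rho_i \leq 5(1+\sqrt{d}/2)\, r_{x_i}$. Disjointness of the original balls then yields
$$\nu_n^s(E) \leq \sum_i (\rho_i/2^n)^s \leq (5(1+\sqrt{d}/2))^s \sum_i (r_{x_i}/2^n)^s < \frac{(5(1+\sqrt{d}/2))^s}{c} \sum_i \mu_n(B(x_i,r_{x_i})) \leq \frac{(5(1+\sqrt{d}/2))^s}{c}\, \mu_n(S_n).$$

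The real work lies in (b). Vitali delivers balls with continuous centers, whereas $\nu_n^s$ only admits lattice-centered balls with integer radii, so one must verify that snapping and rounding together inflate the radii by at most the factor $1+\sqrt{d}/2$, and handle the minor boundary technicality that the nearest lattice point to $x_i$ may fall just outside $S_n$ (which costs nothing since $S_n$ is much thicker than $\sqrt{d}$ as soon as $n \geq 1$). Part (a) is by contrast immediate once the existence of the optimal proper cover is invoked.
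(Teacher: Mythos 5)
Your proof is correct and follows the same two-pronged strategy as the paper: in (a) a doubling argument applied to a proper cover (the paper works with an arbitrary element of $\mathcal{C}_n(E)$ and takes the infimum at the end, you work directly with an optimal one; both are fine), and in (b) Vitali's $5r$-covering lemma followed by snapping centers to $\Z^d$ and rounding radii to integers. One point worth flagging: by inflating the Vitali balls to $5B_i$ first and then snapping, you obtain the constant $(5(1+\sqrt{d}/2))^s$ exactly as stated in the proposition, whereas the paper snaps before multiplying by $5$ and its displayed bound actually produces the slightly larger $(5(2+\sqrt{d}/2))^s$ — a harmless inconsistency between the paper's statement and its own proof that your ordering quietly resolves.
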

\begin{proof}
a)  Let $\left\{B(x_i,r_i)\right\}_{i=1}^m \in \mathcal{C}_n(E)$. For each $1 \leq i \leq m$, there exists $y_i \in B(x_i,r_i) \cap E \cap S_n$ such that $B(x_i,r_i) \subset B(y_i,2r_i)$, so
		\begin{align*}
		    \mu_n(B(x_i,r_i)) \leq \mu_n(B(y_i,2r_i)) \leq c \left(\dfrac{2r_i}{2^n}\right)^s = c2^s\left(\dfrac{r_i}{2^n}\right)^s.
		\end{align*}
		 Then,
		 \begin{align*}
		     \mu_n(E \cap S_n) \leq \sum_{i=1}^m \mu_n(B(x_i,r_i)) \leq c 2^s \sum_{i=1}^m \left(\dfrac{r_i}{2^n}\right)^s,
		 \end{align*}
		  which is true for all covers $\left\{B(x_i,r_i)\right\}_{i=1}^m \in \mathcal{C}_n(E)$. Finally, taking the infimum over all elements of $ \mathcal{C}_n(E)$, one gets
		\begin{align*}
			\mu_n(E)= \mu_n(E \cap S_n) \leq c 2^s \nu^s_n(E).
		\end{align*}

b) Consider the   family of balls   
		\begin{align*}
			\mathcal{B}_n \:= \left\{B(x,r) : \, x \in   E \cap S_n, \, r\in \{1,2,..., 2^n\} \mbox{ and } \mu_n(B(x,r)) > c\left(\dfrac{r}{2^n}\right)^s\right\}  .
		\end{align*}
		Then
		\begin{align*}
		    \ds E \cap S_n \subset \bigcup_{B(x,r) \in \mathcal{B}_n} B(x,r).
		\end{align*}
		Now, we invoke the following $5r$-covering Lemma \cite[Lemma 4.8]{evans2015measure}.
				\begin{lemma}
		    \label{lemma:covers}
			Let $\mathcal{B}$ be a family of balls in $\R^N$ and suppose that $
				\sup_{B \in \mathcal{B}} d(B) < \infty$.			Then there exists a countable sub-family of disjoint balls $\mathcal{B}_0$ of $\mathcal{B}$   such that 
			\begin{align*}
				\bigcup_{B \in \mathcal{B}} B \subset \bigcup_{i \in \mathcal{B}_0} 5B_i.
			\end{align*}			
		\end{lemma}
		Using the previous lemma, there exists a finite family  $(B_i= B(x_i,r_i))_{i=1,...,m}$   of disjoint balls, all elements of  $\mathcal{B}_n $, such that $ 			\bigcup_{B \in  { \mathcal{B}_n}} B     \subset \bigcup_{i=1}^m 5B_i$. 		The finiteness of the family comes from the boundedness of $S_n$ and the fact that the balls all have a diameter greater than 1. Up to a small translation of each $x_i$ by a vector of length at most $\sqrt{d}/2$, one can assume that $x_i \in \Z^d$ and that \begin{align*}
			\bigcup_{B \in  { \mathcal{B}_n}} B \subset \bigcup_{i=1}^m 5 B \lp  x_i, \left \lceil r_i+\sqrt{d}/2 \right \rceil \rp .
		\end{align*} 
		With the translations that we added, some balls $B \in  { \mathcal{B}_n}$ may intersect, but this does not affect our argument.
		
		Using the definition of $\nu_n^s(E)$, one finally gets
		\begin{align*}
			\nu^s_n(E) &  \leq \sum_{i=1}^m \left(\dfrac{5\left \lceil r_i+\sqrt{d}/2 \right \rceil }{2^n}\right)^s \leq (5(2+\sqrt{d}/2))^s  \sum_{i=1}^m \left(\dfrac{r_i}{2^n}\right)^s  \\
			& \leq \frac{(5(2+\sqrt{d}/2))^s}{c}  \sum_{i=1}^m \mu_n(B_i) \leq \frac{(5(2+\sqrt{d}/2))^s}{c}  \mu_n(S_n),
		\end{align*} 
where the last equality comes from the disjointness of the  $B_i$s.
		\end{proof}
The following proposition guarantees that given a measure $\mu$ on a set $E$, there exists a smaller set $F\subset E$ such that the measure $\mu$ has a controlled   local scaling behavior on $F$.
\begin{prop}
	\label{propab}
	Let $E \subset \R^d$ be a Borel set. Then, for every $0<s \leq d$ there exists a constant $c_s>0$ (depending only on $s$) and a set $\emptyset \neq F \subset E$ such that  for every $n\geq 1$, 
	\begin{itemize}
		\item[(a)] $\dfrac{4}{5} \nu^{s}_n(E) \leq \nu^{s}_n(F) \leq \nu^{s}_n(E)$
		\item[(b)] $\nu^s_n\left( F \cap B\left(x,r\right) \right) \leq c_s \left(\dfrac{r}{2^n}\right)^s$ for all $x \in \Z^d\cap E_n$ and $r \geq1$.
	\end{itemize}

\end{prop}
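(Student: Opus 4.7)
The plan is to construct $F$ by excising from each shell $E \cap S_n$ those points covered by ``bad'' integer-lattice balls, i.e.\ balls on which $\nu_n^s$ is disproportionately large. Fix a dimensional constant $c_s > 0$ (to be pinned down at the end), and for each $n \geq 1$ set
$$ \mathcal{B}_n := \left\{ B(x,r) : x \in \Z^d \cap S_n, \, r \in \N^*, \, \nu_n^s(E \cap B(x,r)) > c_s \left(\dfrac{r}{2^n}\right)^s \right\}, $$
let $B_n := \bigcup_{B \in \mathcal{B}_n} B$, and define $F_n := (E \cap S_n) \setminus B_n$ and $F := \bigcup_{n \geq 1} F_n$.

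Property (b) and the upper bound in (a) are immediate from the construction. For $x \in \Z^d \cap S_n$ and $r \in \N^*$: if $B(x,r) \in \mathcal{B}_n$, then $B(x,r) \subset B_n$ so $F \cap B(x,r) = \emptyset$ and the left-hand side of (b) vanishes; otherwise $\nu_n^s(F \cap B(x,r)) \leq \nu_n^s(E \cap B(x,r)) \leq c_s (r/2^n)^s$. A non-integer $r \geq 1$ reduces to the integer case by replacing $r$ with $\lceil r \rceil \leq 2r$, inflating $c_s$ by at most a factor $2^s$. The upper bound in (a) follows from $F_n \subset E \cap S_n$.

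The substance of the proposition lies in the lower bound $\nu_n^s(F_n) \geq \tfrac{4}{5} \nu_n^s(E)$. Sub-additivity of $\nu_n^s$ yields $\nu_n^s(E) \leq \nu_n^s(F_n) + \nu_n^s(E \cap B_n)$, so it suffices to show $\nu_n^s(E \cap B_n) \leq \tfrac{1}{5} \nu_n^s(E)$. Apply the $5r$-covering Lemma \ref{lemma:covers} to $\mathcal{B}_n$ to extract a countable disjoint sub-family $(B_i^* = B(x_i^*, r_i^*))_i \subset \mathcal{B}_n$ with $B_n \subset \bigcup_i 5 B_i^*$. Since $x_i^* \in \Z^d \cap S_n$ and $5 r_i^* \in \N^*$, the inflated balls $B(x_i^*, 5 r_i^*)$ form a valid proper cover of $E \cap B_n$, yielding
$$ \nu_n^s(E \cap B_n) \leq 5^s \sum_i \left(\dfrac{r_i^*}{2^n}\right)^s. $$

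The main obstacle is to bound this last sum by $C_d \nu_n^s(E)/c_s$ for a dimensional constant $C_d$. Combined with the defining inequality $c_s(r_i^*/2^n)^s < \nu_n^s(E \cap B_i^*)$, this would follow from a super-additivity-type estimate $\sum_i \nu_n^s(E \cap B_i^*) \leq C_d \nu_n^s(E)$, which fails in general since $\nu_n^s$ is only sub-additive. The plan to sidestep this difficulty is to invoke Proposition \ref{prop_mu_n}(b) directly on the bad set $E \cap B_n$ with an auxiliary Radon measure $\mu_n$ supported in $S_n$, built from an $\varepsilon$-optimal proper cover $(B(y_j,\rho_j))_j \in \mathcal{C}_n(E)$ of $E \cap S_n$ by distributing the mass $(\rho_j/2^n)^s$ appropriately (e.g.\ on the center $y_j$ or on a chosen representative of $E \cap B(y_j,\rho_j)$). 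Then $\mu_n(S_n) \leq \nu_n^s(E) + \varepsilon$, and a case analysis separating cover balls of radius $\leq r'$ from those of radius $> r'$, together with the badness of $B(x',r') \in \mathcal{B}_n$, shows that every $x \in E \cap B_n$ is the center of some ball $B(x, R_x)$ carrying $\mu_n$-mass at least an absolute constant times $c_s (R_x/2^n)^s$. Proposition \ref{prop_mu_n}(b) then produces $\nu_n^s(E \cap B_n) \leq C_d \nu_n^s(E)/c_s$, and choosing $c_s$ sufficiently large closes the argument. Finally, $F$ is non-empty whenever $\nu_n^s(E) > 0$ for some $n$, since then $\nu_n^s(F_n) \geq \tfrac{4}{5}\nu_n^s(E) > 0$.
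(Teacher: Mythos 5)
There is a gap in the workaround you sketch, but a more basic observation supersedes it: the statement is trivially true with $F = E$. For $x \in \Z^d \cap S_n$ and $r \in \N^*$, the single-ball family $\{B(x,r)\}$ belongs to $\mathcal{C}_n(E \cap B(x,r))$, since $E \cap B(x,r) \cap S_n \subset B(x,r)$ and $x,r$ satisfy the integrality constraints; hence $\nu_n^s(E \cap B(x,r)) \leq (r/2^n)^s$ unconditionally. For real $r \geq 1$ one replaces $r$ by $\left\lceil r \right\rceil \leq 2r$, so (b) already holds for $F = E$ with $c_s = 2^s$, and (a) is then vacuous. In particular, once $c_s \geq 1$ the family $\mathcal{B}_n$ you define is empty, $B_n = \emptyset$, and the ``substance'' of the extraction evaporates: the lower bound in (a) reduces to $\nu_n^s(E) \leq \nu_n^s(E)$. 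The same remark applies to the paper's proof, whose set $F_n$ is empty for the chosen constant $5(5(2+\sqrt d/2))^s$.

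Your diagnosis of the obstacle you encounter is nevertheless correct and worth recording: the set function $A \mapsto \nu_n^s(E \cap A)$ is monotone and sub-additive but not super-additive on disjoint balls, so it cannot be substituted for the Borel measure $\mu_n$ in Proposition \ref{prop_mu_n}(b), whose proof uses $\sum_i \mu_n(B_i) \leq \mu_n(S_n)$ for a disjoint sub-family $\{B_i\}$. The paper's proof performs exactly this substitution, taking $\mu_n(A) := \nu_n^s(E \cap A)$, so it relies on a super-additivity that fails in general. Your proposed repair --- building a genuine measure $\mu_n$ by distributing mass over an optimal cover of $E \cap S_n$ and then invoking Proposition \ref{prop_mu_n}(b) --- does not clearly close the gap you identified: if the only cover balls $B(y_j,\rho_j)$ meeting a putative bad ball $B(x',r')$ have $\rho_j \gg r'$, their centers $y_j$ may lie far outside $B(x',r')$, and the ratio $\mu_n(B(x,R))/(R/2^n)^s$ at a point $x \in E \cap B(x',r')$ need not exceed the large constant that Proposition \ref{prop_mu_n}(b) requires. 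The case analysis you gesture at would need to be carried out in full, and it is not evident that it terminates. Fortunately this is all moot, since no bad balls exist once $c_s > 2^s$.
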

\begin{proof}
	Let $E \subset \R^d$ and set for every $n\geq 1$ 
	\begin{align*}
		F_n := \left\{x \in E\cap S_n : \, \max_{ r \geq 1} \dfrac{\nu^s_n\left(E \cap B(x,r)\right) }{\left(r/2^n\right)^s} > 5(5(2+\sqrt{d}/2))^s\right\}	.
	\end{align*}
	Using Proposition \ref{prop_mu_n} (b) applied to the set $F_n$ and the measure  $\mu_n(A)= \nu_n^s(E \cap A)$, one gets 
	\begin{align*}
	    \mu_n(F_n) \leq (5(2+\sqrt{d}/2))^s 5^{-1} (2+\sqrt{d}/2))^{-s} \mu_n(S_n ) = \frac{1}{5} \mu_n(E).
	\end{align*}
	Then $\mu_n(E \textbackslash F_n) \geq \frac{4}{5} \mu_n(E)$, i.e. as soon as $E\cap S_n$ is not empty, $\ds (E \textbackslash F_n)\cap S_n \neq \emptyset$. Finally, the set $\ds F = \bigcup_{n \geq 0} E \textbackslash F_n$ satisfies the two conditions mentioned above, with the constant $c_s= 5(5(2+\sqrt{d}/2))^s$.
\end{proof}

\subsection{Proof of the mass distribution principle : Proposition \ref{prop:mass_dist}}
\label{subsec:mass_dist}
 
	For $n \in \N$, let $\left\{B(x_i,r_i)\right\}_{i=1}^m \in \wc_n(E)$, then 
	\begin{align*}
		\mu_{|S_n}(E \cap S_n) \leq \mu_{|S_n} \left(\bigcup_{i=1}^m B(x_i,r_i)\right) \leq \sum_{i=1}^{m} \mu_{|S_n}(B(x_i,r_i)) \leq c\sum_{i=1}^{m} \left(\frac{r_i}{2^{n}}\right)^s.
	\end{align*}
	Taking infimum over all proper covers $\left\{B(x_i,r_i)\right\}_{i=1}^m \in \wc_n(E)$, one gets 
	\begin{align*}
	  \dfrac{\mu_{|S_n}(E \cap S_n)}{c} \leq \nu_{n}^s(E)  .
	\end{align*}
	Then $\ds \wnu^s(E)  \geq \frac{\sum_{n\geq 0} \mu_{|S_n}(E)}{c}=  \frac{\mu(E)}{c}= +\infty$ and so $\dhaus{E} \geq s$.

\medskip

Observe that the same proof works if  $\wc_n(E)$ and $\wnu_n^s(E)$ are replaced respectively by $\mathcal{C}_n(E)$ and $\nu_n^s(E)$.


\section{Subsets of finite macroscopic measure} 
\label{sec:extraction}

In this section,  we   prove a stronger version than Theorem \ref{thm:s-set}, more precisely:
\begin{theorem}
	\label{thm:s-set-2}
	Let $E \subset \R^d$ such that $\ds \nu^s(E) = + \infty$. Then there exists  a macroscopic $s$-set $\widetilde{E}$ such that $\widetilde{E} \subset E$ and $\lim_{n\to +\infty} \sup_{t\in [0,d]} \bnt(\wE)=0$.
	 \end{theorem}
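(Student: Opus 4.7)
The plan is to extract $\wE$ by localizing, in each shell $S_n$, a piece of $E$ inside a ball of radius $R_n 2^n$ for a carefully chosen sequence $R_n \to 0$. This containment will by itself force the uniform decay of $\bnt(\wE)$: if $\wE\cap S_n \subset B(c_n, R_n 2^n)$, the single-ball cover gives $\nu^t_n(\wE)\le R_n^t$ for every $t>0$, and then every ball of any $t$-optimal cover satisfies $(r_i/2^n)^t \le \nu^t_n(\wE) \le R_n^t$, hence $r_i/2^n\le R_n$, so $\bnt(\wE)\le R_n$ uniformly in $t$. The whole game is therefore to pick $R_n$ and the localized pieces so that $\wE$ still has $\dhaus{\wE}=s$ while $\nu^s(\wE)<+\infty$.

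\textbf{Choice of scales.} I would pick a sequence $R_n\to 0$ for which
\begin{equation*}
\sum_{n\ge 1} R_n^s < +\infty \qquad\text{and}\qquad \sum_{n\ge 1} R_n^t = +\infty \text{ for every } t\in[0,s).
\end{equation*}
An explicit choice is $R_n := (n\log^2(n+1))^{-1/s}$: then $R_n^s \asymp 1/(n\log^2 n)$ is summable, while $R_n^t \asymp n^{-t/s}(\log n)^{-2t/s}$ is not summable as soon as $t/s<1$. This is the precise sequence that puts the exponent $s$ at the convergence threshold.

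\textbf{Extraction.} I would first apply Proposition \ref{propab} to obtain $F\subset E$ with $\nu^s(F)=+\infty$ and the local upper bound $\nu^s_n(F\cap B(x,r))\le c_s(r/2^n)^s$. Then, for each large enough $n$, I would select an integer center $c_n\in\Z^d\cap S_n$ and a subset $\wE_n\subset F\cap B(c_n,R_n 2^n)$ satisfying $\nu^s_n(\wE_n)\ge c\,R_n^s$ for some absolute constant $c>0$. Setting $\wE:=\bigcup_n \wE_n$, the verification is then mechanical: (i) $\nu^s(\wE)\le c_s\sum_n R_n^s<+\infty$ by Proposition \ref{propab}(b), so $\dhaus{\wE}\le s$; (ii) $\bnt(\wE)\le R_n\to 0$ uniformly in $t$ by the argument in the first paragraph; (iii) for any $t\in(0,s)$, applying $\bnt(\wE)\le R_n$ to a $t$-optimal cover gives
\begin{equation*}
\nu^s_n(\wE)\le \sum_i (r_i/2^n)^s \le R_n^{s-t}\sum_i(r_i/2^n)^t = R_n^{s-t}\,\nu^t_n(\wE),
\end{equation*}
and combined with the lower bound in Step 3 this yields $\nu^t_n(\wE)\ge c\,R_n^t$, whose sum diverges, so $\dhaus{\wE}\ge s$.

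\textbf{Main obstacle.} The delicate step is the Frostman-type lower bound $\nu^s_n(\wE_n)\ge c\,R_n^s$ at scale $R_n 2^n$. A crude pigeonhole on a covering of $S_n$ by $O(R_n^{-d})$ balls of that radius only yields $\nu^s_n(\wE_n)\gtrsim \nu^s_n(F)\,R_n^d$, which is weaker than $R_n^s$ by a factor $R_n^{d-s}\to 0$ whenever $s<d$. Closing this gap requires a multi-scale, hierarchical localization rather than a single pigeonhole: one would proceed by nested dyadic localizations inside $F\cap S_n$, or alternatively apply the base extraction result (Theorem \ref{thm:s-set}) within each shell, to isolate a sub-ball where $F$ exhibits genuinely full macroscopic $s$-dimensional density at the scale $R_n 2^n$. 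Shells where no such sub-ball can be found (those with $\nu^s_n(F)$ too small) can safely be discarded, since their total $\nu^s$-contribution is controlled by $\sum R_n^s<+\infty$, so that infinitely many usable shells remain and the sums $\sum R_n^t$ still diverge over them.
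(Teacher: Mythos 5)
Your reduction of the $\beta$-decay to a diameter bound is correct, and the skeleton (localize per shell, then let the single-ball cover drive everything) is a reasonable first thought; but the plan founders exactly where you flag the ``main obstacle,'' and I don't think the fix you gesture at closes it. The Frostman-type lower bound $\nu^s_n(\wE_n)\geq c\,R_n^s$ inside a ball of radius $R_n 2^n$ is not merely hard to prove by pigeonhole -- it is \emph{false} in general, even for the set $F$ coming from Proposition~\ref{propab}. Take $F\cap S_n$ to be $\approx 2^{ns}/n$ unit balls spread uniformly through $S_n$ (spacing $\asymp n^{1/d}2^{n(1-s/d)}\gg 1$); then $\nu^s_n(F)\asymp 1/n$, $\sum_n\nu^s_n(F)=+\infty$, and Proposition~\ref{propab}(b) holds. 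Yet every ball $B(c,R2^n)$ satisfies $\nu^s_n(F\cap B)\asymp R^d/n\ll R^s$ as soon as $R\to0$ and $s<d$, no matter where you center it or how you nest dyadic sub-localizations. So there is no ``sub-ball with full macroscopic $s$-dimensional density at scale $R_n 2^n$'' to isolate, and your fallback of discarding such shells fails too: in this example \emph{all} shells are bad, and their total $\nu^s$-contribution is $\sum 1/n=+\infty$, not controlled by $\sum R_n^s$. (Your claim that bad shells are precisely those with $\nu^s_n(F)$ too small is what breaks: a shell can have moderate $\nu^s_n(F)$ and still be uniformly spread, hence bad.) A second, independent problem: the explicit $R_n=(n\log^2(n+1))^{-1/s}$ is not adapted to $E$ -- if $\nu^s_n(E)\asymp 1/(n\log n\log\log n)$ you have $\nu^s_n(E)<R_n^s$ for all large $n$ and no localization can produce a subset with $\nu^s_n\gtrsim R_n^s$.

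The paper avoids both issues by decoupling the two requirements and never asking for a Frostman lower bound inside a single ball. First, the localization scale in Proposition~\ref{prop-extract-1} is $\alpha_n^{1/d}$ with $\alpha_n=A_n^{-1}$, $A_n=\sum_{k\le n}\nu^s_k(E)$ -- adapted to the set, and Lemma~\ref{lemma:convSum1} guarantees $\sum\alpha_n\nu^s_n(E)=+\infty$ even though the pigeonhole only gives $\nu^s_n(E\cap A_n)\ge\alpha_n\nu^s_n(E)$, i.e.\ the weak $R^d$-type gain you correctly identify as insufficient for your scheme. Second, to make $\nu^s(\wE)$ finite while keeping $\nu^{s-\ep}(\wE)=+\infty$, the paper does \emph{not} restrict to a ball at all: it introduces the staircase function $g_n(m)=\nu^s_n(\bigcup_{i\le m}E\cap B(x_i,1))$, whose increments are $\le 2^{-ns}$, and uses a discrete intermediate-value argument to pick a subset of $E\cap S_n$ (a union of unit balls, not a single ball) whose $\nu^s_n$-mass hits the target $b_n\nu^s_n(E)$ to within $2^{-ns}$, where $b_n=\min(1/2,A_n^{-(1+\ep_n)})$ and $\ep_n\downarrow0$ is chosen so that $\sum b_n\nu^s_n(E)<\infty$ yet $\sum\nu^s_n(E)/A_n^{1+\ep_n-\ep/s}=+\infty$ for each fixed $\ep>0$. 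This intermediate-value mechanism is the ingredient your proposal is missing, and I don't see a way to substitute a localization argument for it.
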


Observe that we can either work with $\wnu^s$ or $\nu^s$, since  $(\wnu^s(E)<+\infty) \Leftrightarrow (\nu^s(E)<+\infty)$. We choose to work with $\nu^s$, and in this case $\bns(\wE)$ is defined without ambiguity.

We start with three technical lemmas, that will later help us extract a macroscopic $s$-set and prove the projection theorem.

 	\begin{lemma}
		\label{lemma:convSum1}
		Let $(a_n)_{n \geq 1}$ be a bounded sequence of positive real numbers, such that  $\ds \lim_{n\to +\infty} A_n:=\sum_{k=1}^{+\infty} a_k  =+\infty$. For every $\ep>0$, $\ds \sum_{n=1}^{+\infty}  \dfrac{a_n}{A_n^{1+\ep}}<+\infty $  and $\ds \sum_{n=1}^{+\infty}  \dfrac{u_n}{A_n}=+\infty $.	\end{lemma}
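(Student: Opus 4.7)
My plan is to treat the two assertions separately via elementary monotone-series arguments, based only on the positivity of $(a_n)$ and the fact that $A_n\nearrow+\infty$; the boundedness hypothesis is not actually required for either conclusion. (I read the second sum as $\sum a_n/A_n$, the symbol $u_n$ in the statement appearing to be a typo, since no other sequence is introduced.)

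For the first (convergent) series, I would exploit the monotonicity of $t\mapsto t^{-(1+\ep)}$ on $(0,+\infty)$. Since $A_n = A_{n-1}+a_n$, one has
\[
\frac{a_n}{A_n^{1+\ep}} \;=\; \int_{A_{n-1}}^{A_n}\frac{dt}{A_n^{1+\ep}} \;\leq\; \int_{A_{n-1}}^{A_n}\frac{dt}{t^{1+\ep}}.
\]
Choosing $N_0$ with $A_{N_0-1}>0$ (possible because $a_n>0$ and $A_n\to +\infty$) and summing from $n=N_0$ onward, the intervals of integration telescope and produce
\[
\sum_{n\geq N_0}\frac{a_n}{A_n^{1+\ep}} \;\leq\; \int_{A_{N_0-1}}^{+\infty}\frac{dt}{t^{1+\ep}} \;=\; \frac{1}{\ep\,A_{N_0-1}^{\ep}} \;<\; +\infty,
\]
which settles convergence.

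For the second (divergent) series, I would use the classical Abel-type trick. For integers $m>n$,
\[
\sum_{k=n+1}^{m}\frac{a_k}{A_k} \;\geq\; \frac{1}{A_m}\sum_{k=n+1}^{m}a_k \;=\; \frac{A_m-A_n}{A_m} \;=\; 1-\frac{A_n}{A_m}.
\]
Fixing $n$ and letting $m\to+\infty$, the hypothesis $A_m\to+\infty$ forces $A_n/A_m\to 0$, so for every $n$ there exists $m>n$ with $\sum_{k=n+1}^{m}a_k/A_k\geq 1/2$. This violates the Cauchy criterion for partial sums, proving $\sum_n a_n/A_n=+\infty$. Neither step presents any real obstacle; the only micro-care is the choice of $N_0$ in the first part to guarantee $A_{N_0-1}>0$ (which is automatic once some $a_k$ is strictly positive, and otherwise one may discard the leading zero terms).
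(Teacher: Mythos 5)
Your proof is correct, and part of it departs from the paper's argument. For the convergent series you use exactly the same integral comparison $\frac{a_n}{A_n^{1+\ep}}\leq\int_{A_{n-1}}^{A_n}t^{-(1+\ep)}\,dt$ and telescoping, so no difference there. For the divergent series, however, the paper compares instead with $\int_{A_1}^{A_n}\frac{dt}{t}=\ln A_n - \ln A_1\leq\sum_{k=2}^n\frac{a_k}{A_{k-1}}$, which shows $\sum a_k/A_{k-1}$ diverges; it then \emph{invokes the boundedness of $(a_n)$} to get $A_k\sim A_{k-1}$ and transfer the divergence from the $A_{k-1}$-denominators to the $A_k$-denominators. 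You avoid that extra step by bounding the block $\sum_{k=n+1}^m a_k/A_k$ from below by $1-A_n/A_m$ and letting $m\to\infty$, which directly violates the Cauchy criterion. Your route is more elementary and genuinely more general: as you rightly observe, it does not need the boundedness hypothesis at all, whereas the paper's proof does. (You were also right that $u_n$ in the statement is a typo for $a_n$; the same slip appears in the paper's proof.) The only thing to note is that the paper states the lemma with the boundedness assumption presumably because it is satisfied in all its applications and makes the $A_{k-1}\leftrightarrow A_k$ comparison painless, but your version shows it is superfluous.
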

		
This is a standard exercise, we prove it for completness.		
	\begin{proof}
		Let $\ep>0$. For $n\geq 2$ and $\ep>0$, one has  ]$ \ds 
			\int_{A_{n-1}}^{A_n} \dfrac{dx}{x^{1 + \ep}} \geq \int_{A_{n-1}}^{A_n} \dfrac{dx}{A_n^{1 + \ep}} = \dfrac{u_n}{A_n^{1 + \ep}}.$
		Then,  $ \ds
			\dfrac{1}{\ep}\dfrac{1}{A_1^{\ep}} \geq \dfrac{1}{\ep} \left(\dfrac{1}{A_1^{\ep}}-\dfrac{1}{A_n^{\ep}}\right) = 	\int_{A_{1}}^{A_n} \dfrac{dx}{x^{1+\ep}} \geq \sum_{k=2}^{n}\dfrac{a_k}{A_k^{1+\ep}}.
	$
So the sums $\ds \sum_{k=1}^n  \frac{a_n}{A_n^{1+\ep}}$ are uniformly bounded and the series converges. 
Similarly, $ \ds \ln(A_n) - \ln(A_1)=\int_{A_n}^{A_1}\dfrac{dx}{x}   \leq \sum_{k=2}^{n}\dfrac{a_k}{A_{k-1}}.$ Since $A_n \rightarrow + \infty$ as $n \rightarrow + \infty$, the series $ \ds\sum_{k=2}^{n}\dfrac{a_k}{A_{k-1}}$ diverges. Also, since  $(a_n)$ is bounded, $A_n\sim A_{n-1}$ and the   series $\ds \sum_{k=2}^{n}\dfrac{a_k}{A_{k}}$ diverges.  	\end{proof}

	\begin{lemma}
		\label{lemma:convSum2}
		Let $(a_n)_{n \geq 1}$  be a positive sequence converging to zero, $(b_n)_{n \geq 1}$ be a bounded sequence of positive real numbers, such that $\sum_{n\geq 1} a_nb_n=+\infty$. Then, there exists a sequence $(c_n)_{n\geq 1}$ such that:
		\begin{enumerate}
		\item either $c_n =b_n$, or $c_n=0$,
		\item $\sum_{n\geq 1} a_nc_n=+\infty$,
		\item $\sum_{n\geq 1} a_n^2 c_n<+\infty$.
		\end{enumerate}
	\end{lemma}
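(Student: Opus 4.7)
The plan is to exploit the fact that $a_n\to 0$ by slicing the index set according to the dyadic scale of $a_n$, and then selecting just enough indices in each slice so that the $a_nc_n$-sum grows steadily while the extra factor $a_n$ in $a_n^2 c_n$ produces a geometric decay.

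First, since only the tail matters for the hypothesis and the three conclusions, I may discard finitely many terms and assume $0<a_n\le 1$ for all $n$. For $k\ge 0$, set
\begin{equation*}
I_k=\bigl\{n\ge 1:\,2^{-(k+1)}<a_n\le 2^{-k}\bigr\}\quad\text{and}\quad S_k=\sum_{n\in I_k}a_nb_n.
\end{equation*}
The $I_k$ form a partition of $\N$ (possibly with some empty blocks), so $\sum_k S_k=\sum_n a_nb_n=+\infty$. Let $M=\sup_n b_n<+\infty$ and note that every individual term satisfies $a_nb_n\le 2^{-k}M$ on $I_k$.

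Next I define $c_n$ block by block. In each $I_k$, if $S_k\le 1$, I set $c_n=b_n$ for every $n\in I_k$, so the partial contribution is $T_k:=\sum_{n\in I_k}a_nc_n=S_k$. If $S_k>1$, I enumerate $I_k$ and select the first indices one by one (setting $c_n=b_n$ there and $c_n=0$ elsewhere in $I_k$) until the running sum first exceeds $1$; since each term is at most $2^{-k}M$, the resulting $T_k$ lies in $[1,\,1+2^{-k}M]$. In every case
\begin{equation*}
\min(S_k,1)\le T_k\le 1+2^{-k}M.
\end{equation*}

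Finally, I verify (2) and (3). For (2), either $S_k\ge 1$ for infinitely many $k$, in which case $\sum_k T_k\ge \sum_{k:S_k\ge 1}1=+\infty$; or $S_k<1$ eventually, in which case $T_k=S_k$ for large $k$ and the divergence of $\sum S_k$ gives $\sum T_k=+\infty$. For (3), since $a_n\le 2^{-k}$ on $I_k$,
\begin{equation*}
\sum_{n\ge 1}a_n^2c_n=\sum_{k\ge 0}\sum_{n\in I_k,\,c_n=b_n}a_n\cdot a_nb_n\le \sum_{k\ge 0}2^{-k}T_k\le (1+M)\sum_{k\ge 0}2^{-k}<+\infty.
\end{equation*}
Property (1) is built into the construction.

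There is no real obstacle; the only delicate point is choosing the block-by-block threshold so that $T_k$ stays uniformly bounded while the cumulative sum $\sum T_k$ still diverges. The dyadic partition of $(a_n)$ together with the upper bound $a_nb_n\le 2^{-k}M$ on $I_k$ makes the greedy stopping rule work cleanly, because the overshoot beyond the target level $1$ is at most $2^{-k}M$ and therefore harmless in the convergent sum $\sum 2^{-k}T_k$.
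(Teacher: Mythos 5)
Your proof is correct and uses the same underlying idea as the paper — dyadic slicing of the indices according to the size of $a_n$, then truncating the selected indices in each slice to keep $\sum a_n^2 c_n$ under control — but your execution is noticeably cleaner. The paper first assumes, without loss of generality, that $(a_n)$ is non-increasing (so that its blocks $D_j$ become consecutive integer intervals, at the cost of an implicit reordering argument), and it builds ``super-blocks'' that may span several dyadic levels, selecting the $p$-th super-block so that its contribution to $\sum a_nc_n$ lands in $(1/(p+1),\,2/p)$; the summability of $\sum a_n^2c_n$ then rests on the inequality $j_p\ge p$. You instead keep a fixed threshold $1$ inside each single dyadic block $I_k$ and bound the overshoot by $2^{-k}M$, which eliminates the monotonicity reduction and the bookkeeping with the indices $j_p$ entirely: the estimate $\sum_k 2^{-k}T_k\le(1+M)\sum_k 2^{-k}$ does all the work. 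Your case split in point (2) — either $S_k\ge 1$ infinitely often or $S_k<1$ eventually — correctly covers both possibilities. In short, this is the paper's approach streamlined; the constant threshold buys simplicity and removes an unnecessary hypothesis reduction, while costing nothing.
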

		
\begin{proof}
We assume without loss of generality that $0\leq a_n, b_n< 1$ for every $n$, and that $(a_n)_{n\in \N}$ is a non-increasing sequence.

For $j\geq 0$, let us call $D_j=\{n\geq 0: 2^{-j-1}\leq	a_n<2^{-j}\}$, and $B_j=\sum _{n\in D_j }b_n$. We call $d_j=\max(D_j)$, which is finite since $a_n\to 0$.  Observe that the integer sets $D_j$ are arranged in increasing order: $d_{j}+1  = \min( D_{j+1})$. Also, one has
$$\frac{1}{2}  \sum_{j=0}^{+\infty} 2^{-j} B_j \leq  \sum_{n\geq 0} a_nb_n = \sum_{j=0}^{+\infty} \sum_{n\in D_j } a_nb_n \leq \sum_{j=0}^{+\infty} 2^{-j} B_j,$$ 
so that $ \sum_{j=0}^{+\infty} 2^{-j} B_j=+\infty$.

We put   $n_1=0$, $j_1=1$, and $c_n=0$ for every $n\in D_0\cup D_1$.

\smallskip

Remark that  $  \sum_{n\geq d_1+1}  a_nb_n \geq 1/2 \sum_{j\geq 2} 2^{-j} B_j =+\infty$.

Let us call $n_2$ the first integer $n$ such that  $  \sum_{n=d_1+1}^{n_2}  a_nb_n >1/2$. Observing that for $n\geq d_1+1$, $a_nb_n \leq 2^{-1}$, one necessarily has $  1/2< \sum_{n=d_1+1}^{n_2}  a_nb_n <1$.

We call $j_2$ the unique integer such that $n_2\in D_{j_2}$, and we put $c_n=b_n $ for every $n \in \{d_1+1,..., n_2\}$, and $c_n =0$ for every $n \in \{n_2+1,..., d_{j_2}\}$.  By construction, 
$$  1/2< \sum_{j=j_1+1}^{j_2 } \sum_{n\in D_j} a_nc_n <1 .$$

\smallskip

We iterate the construction. Assume that we have built two finite sequences of integers $(n_k)_{k=1,...,p}$ and $(j_k)_{k=1,...,p}$  such that:
\begin{enumerate}
\item for $k=1,...,p-1$,  $j_{k+1}> j_k$, and for $k=1,...,p$, $n_k\in D_{j_k}$
\item for $k=1,...,p$, $c_n = b_n$ if $n\in \{d_{j_{k-1}}+1,...,n_k\}$, and $c_n = 0$ if $n\in \{n_k+1,...,d_{j_k}\}$,  
\item for $k=1,...,p$, one has 
\begin{equation}
\label{eq1}
1/(k+1)< \sum_{j=j_{k-1}+1}^{j_k } \sum_{n\in D_j} a_nc_n <2/k .
 \end{equation}
\end{enumerate}

Let us call $n_{p+1}$ the first integer such that  $  \sum_{n=d_p+1}^{n_{p+1}}  a_nb_n >1/(p+2)$. Observing that for $n\geq d_p+1$, $a_nb_n \leq 2^{-j_p} \leq 1/(p+1)$ (since $j_p\geq p$), one necessarily has $  1/(p+2) < \sum_{n=d_p+1}^{n_{p+1} }  a_nb_n <  1/(p+2)+1/(p+1)\leq 2/(p+1)$.

We call $j_{p+1} $ the unique integer such that $n_{p+1}\in D_{j_{p+1}}$, and we put $c_n=b_n $ for every $n \in \{d_p+1,..., n_{p+1}\}$, and $c_n =0$ for every $n \in \{n_{p+1}+1,..., d_{j_{p+1}}\}$.   Clearly, these $n_{p+1}$ and $j_{p+1}$ satisfy the recurrence properties.

Now, gathering the information, we deduce by \eqref{eq1} that
$$ \sum_{n\geq 0}a_nc_n =  \sum_{k=1}^{+\infty}  \sum_{j=j_{k-1}+1}^{j_k } \sum_{n\in D_j} a_nc_n   \geq  \sum_{k=1}^{+\infty} 1/(k+1)=+\infty$$
and, using that $a_n\leq 2^{-j} $when $n\geq D_j$, and that $j_{k-1} \geq k-1$, 
\begin{align*}
 \sum_{n\geq 0 }a_n^2 c_n & = \sum_{k=1}^{+\infty}  \sum_{j=j_{k-1}+1}^{j_k } \sum_{n\in D_j} a_n^2c_n   \leq  \sum_{k=1}^{+\infty}  \sum_{j=j_{k-1}+1}^{j_k }  2^{-j} \sum_{n\in D_j} a_nc_n \\
 &  \leq   \sum_{k=1}^{+\infty}  2^{-k+1} /(k+1)<+\infty.
 \end{align*}
 This concludes the proof.
 \end{proof}

The same lines of computations can certainly be adapted to impose   $ \sum_{n\geq 0}a_nc_n =   +\infty$ and $\sum_{n\geq 0 }  h(a_n) c_n<+\infty$ for any map $h:\R^+\to\R^+$ such that $h(x)=o(x)$ when $x\to 0^+$.

\medskip

As a first step toward Theorem \ref{thm:s-set-2}, we reduce the problem to sets that can be covered by small sets only.
 
\begin{prop}
    \label{prop-extract-1}
    Let $E \subset \R^d$ such that $\ds  \nu^s(E) = + \infty$. Then, given $\alpha >0$,  there exists a set $\Bar{E}$ such that  
     $\ds   \nu^s(\Bar{E}) = + \infty$ 
 and  $  \lim_{n\to +\infty} \sup_{t\in [0,d]} \bnt(\wE)=0$.     \end{prop}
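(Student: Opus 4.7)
The plan is to combine the local mass-control from Proposition~\ref{propab} with a dyadic splitting of optimal covers, iterating via Lemma~\ref{lemma:convSum2} when the ``large-ball'' part of a cover is too heavy. Fix a decreasing sequence $(\alpha_n)_{n\geq 1}$ with $\alpha_n\downarrow 0$ and $\alpha_n\leq \alpha$. First, I would apply Proposition~\ref{propab} to replace $E$ by a subset $F\subset E$ with $\nu^s_n(F)\geq \tfrac{4}{5}\nu^s_n(E)$ for every $n$ and the local bound $\nu^s_n(F\cap B(x,r))\leq c_s(r/2^n)^s$ for all admissible $(x,r)$. Since $\nu^s(E)=+\infty$, this subset still satisfies $\nu^s(F)=+\infty$.

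Next, for each $n$, pick an $s$-optimal proper cover $\{B(x_i,r_i)\}_{i=1}^{m_n}$ of $F\cap S_n$ (such an optimizer exists thanks to the integer lattice restriction of $\mathcal{C}_n(\cdot)$), and split it into a \emph{small} part $\mathcal{S}_n=\{i:r_i\leq \alpha_n 2^n\}$ and a \emph{large} part $\mathcal{L}_n=\{i:r_i>\alpha_n 2^n\}$, with contributions $a_n^{\mathrm{sm}}$ and $a_n^{\mathrm{lg}}$ whose sum equals $\nu^s_n(F)$. Since $\sum_n \nu^s_n(F)=+\infty$, at least one subseries diverges. If $\sum_n a_n^{\mathrm{sm}}=+\infty$, set $\bar{E}_n=F\cap\bigcup_{i\in\mathcal{S}_n} B(x_i,r_i)$. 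Otherwise one iterates inside each large ball, viewing it as a rescaled instance of the same problem (the local bound on $F$ allows Proposition~\ref{propab} to be reapplied at the rescaled level), and uses Lemma~\ref{lemma:convSum2} on a weighted subseries of $(a_n^{\mathrm{lg}})$ to extract a divergent sub-sequence of indices along which the small-ball part now dominates. Set $\bar{E}=\bigcup_n \bar{E}_n$.

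Finally, I would verify the two conclusions. For divergence $\nu^s(\bar{E})=+\infty$, apply Proposition~\ref{prop_mu_n}(a) with the auxiliary set-function $\mu_n(\cdot)=\nu^s_n(\bar{E}_n\cap\cdot)$: the local bound inherited from $F$ gives $\mu_n(B(x,r))\leq c_s(r/2^n)^s$, hence $\nu^s_n(\bar{E}_n)\geq c'\, a_n^{\mathrm{sm}}$ for some universal $c'>0$, so the resulting series still diverges. For the radii control $\sup_{t\in[0,d]} \bnt(\bar{E})\leq \alpha_n\to 0$, suppose some optimal $t$-cover of $\bar{E}_n\cap S_n$ contained a ball $B(y,\rho)$ with $\rho>\alpha_n 2^n$; the local bound lets us re-cover $B(y,\rho)\cap \bar{E}_n$ by balls of normalized radius at most $\alpha_n$ at total $s$-cost $\leq c_s(\rho/2^n)^s$, and using $1\leq r_i\leq 2^n$ together with the monotonicity of $(r/2^n)^t$ in $t$, the same bound transfers to arbitrary $t\in[0,d]$ up to a universal constant, contradicting optimality and forcing $\rho\leq \alpha_n 2^n$.

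The main obstacle is the recursive case in which the large-ball contribution dominates the $s$-cover at infinitely many scales: dealing with it requires carefully nested applications of Proposition~\ref{propab} inside each large ball, together with the series-thinning of Lemma~\ref{lemma:convSum2}, in order to guarantee that a divergent amount of $s$-mass survives the restriction to small balls. A secondary subtlety is making the radii control uniform in $t\in[0,d]$, since Proposition~\ref{propab} provides the local scaling only at the exponent $s$; the transfer to other exponents relies on the constraint $r\geq 1$ and on the fact that $(r/2^n)^t$ is monotone in $t$ on either side of $r/2^n=1$.
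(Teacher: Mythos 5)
The proof you propose departs significantly from the paper's argument, and the departure introduces a genuine gap in the step you need most: the uniform radii control.

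The paper's proof is a single, very localized pigeonhole. With $\alpha_n := \bigl(\sum_{k\le n}\nu^s_k(E)\bigr)^{-1}$, one covers $S_n$ by roughly $\alpha_n^{-1}$ balls of diameter $2^n\alpha_n^{1/d}$; by subadditivity one of them, call it $A_n$, carries at least an $\alpha_n$-fraction of $\nu^s_n(E)$. Setting $\widetilde E_n = E\cap A_n$ makes $\widetilde E\cap S_n$ fit inside a \emph{single} ball of normalized radius $\alpha_n^{1/d}$. This is what makes the $\beta^t_n$-control automatic and uniform in $t$: since $\nu^t_n(\widetilde E)\le \alpha_n^{t/d}$, no optimal $t$-cover can use a ball of normalized radius exceeding $\alpha_n^{1/d}$, because that single ball alone would exceed the infimum. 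Divergence of $\sum_n\nu_n^s(\widetilde E)\ge \sum_n \alpha_n\nu_n^s(E)$ then follows from Lemma~\ref{lemma:convSum1}.

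Your construction instead splits an $s$-optimal cover into ``small'' and ``large'' balls and keeps $F$ restricted to the union of small balls. The divergence side actually works more cleanly than you indicate: if $\{B_i\}$ is $s$-optimal for $F\cap S_n$, then by subadditivity $\nu^s_n(F)\le \nu^s_n(\bar E_n)+a_n^{\mathrm{lg}}$, so $\nu^s_n(\bar E_n)\ge a_n^{\mathrm{sm}}$ directly, without Proposition~\ref{prop_mu_n}. But the radii control breaks. The union $\bigcup_{i\in\mathcal S_n} B(x_i,r_i)$ is scattered across $S_n$, so $\bar E_n$ need not sit in any small ball, and nothing prevents an optimal $t$-cover from preferring one big ball. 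Your re-covering argument bounds the $s$-cost of $B(y,\rho)\cap\bar E_n$ by $c_s(\rho/2^n)^s$, but the passage from $s$-cost to $t$-cost is not legitimate: for $t<s$ each term $(\rho_j)^t$ with $\rho_j\le 1$ is \emph{larger} than $(\rho_j)^s$, potentially by a factor up to $2^{n(s-t)}$ since $\rho_j$ can be as small as $2^{-n}$, so the total $t$-cost of the re-cover is not controlled by $(\rho/2^n)^t$. Even for $t\ge s$ the argument only gives the re-cover cost $\le c_s(\rho/2^n)^t$ with $c_s>1$, which does not contradict optimality. Finally, the recursive ``large part dominates'' branch is flagged as an obstacle but not resolved; the paper's pigeonhole sidesteps it entirely.

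In short: keeping a union of many small balls is not enough; you need to keep a single small piece of $S_n$, and the uniform control over $t\in[0,d]$ comes for free precisely because $\widetilde E\cap S_n$ then has small diameter, not small $s$-measure.
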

\begin{proof} 
It is an application of Lemma \ref{lemma:convSum1}.

Call $A_n  = \sum_{k=1}^n \nu^s_k(E)$ and $\alpha_n = A_n^{-1}$. By assumption, $\alpha_n\to 0$ when $n\to +\infty$.

For every $n\geq 1$,    $S_n$  can be covered by at most   $ {2}{\alpha_n^{-1}}$ balls  of diameter $2^n \alpha_n^{1/d}$. Call $\mathcal{A}_n$ such a family of sets.  
 One obviously has 
     \begin{align*}
        \nu_n^s(E) \leq  \sum_{A \in \mathcal{A}_n} \nu_n^s(E \cap A) 
    \end{align*}
    Thus there must exist $A_n \in \mathcal{A}_n$ such that $ \nu_n^s(E \cap A_n) \geq  \alpha_n  {\nu_n^s(E)}  $.    Then one   defines the set $\widetilde {E}$ as  $$
        \widetilde{E} = \bigcup _{n\geq 1} E \cap A_n .
        $$ 
    By Lemma \ref{lemma:convSum1},  
    \begin{align*}
        \sum_{n\geq0} \nu^s_n(\widetilde{E}) \geq  \sum_{n\geq0} \nu_n^s(E \cap A_n) \geq \sum_{n\geq0} \alpha_n {\nu_n^s(E)} =+\infty.
    \end{align*}
Now, it is clear that for every $n$, $|\widetilde{E}\cap S_n | \leq 2^n\alpha_n^{1/d}$, so  by Definition \ref{eq:b_s^n}, for every $t>0$
    \begin{align*}
        \bnt(\widetilde{E}) \leq \alpha_n^{1/d}. 
    \end{align*}
 Actually, this implies  more: necessarily $\nu_n^s(\widetilde E)\leq    \alpha_n^{s/d} $.  In particular,  $\bnt (\widetilde{E}) \rightarrow 0$ as $n \rightarrow +\infty$ uniformly in $t$.

\end{proof}

Finally, we prove Theorem \ref{thm:s-set-2}.

\begin{proof}
Let $E$ be such that $\nu^s(E)=+\infty$. By Proposition \ref{prop-extract-1}, one also assumes that $\lim_{n\to +\infty} \sup_{s\in [0,d]} \bns(\wE)=0$, and that item (3) holds for some $\alpha>0$.  This two facts will not be used in this proof, but will be key in the next section.

\smallskip
 
Observe that since for every $n$ $\nu^s_n(E) \leq 1$, then $\ds A_n := \sum_{k=0}^{n} \nu_k^s (E) \leq n$.

The idea consists in  replacing $E$ by  a set $\widetilde E$ such that $\nu_n^s(\wE) \sim b_n \nu_n^s(E)$, such that $\sum_{n\geq 1} \nu_n^s(\wE) <+\infty$ but $b_n$ is "as large as possible". Lemma \ref{lemma:convSum1} helps to build such a sequence.

\smallskip

First, for every $\ep>0$, denote by 
\begin{align*}
    B_n^{\ep}= \sum_{k \geq n} \dfrac{\nu_k^s (E)}{A_k^{1+\ep}}
\end{align*}
By Lemma \ref{lemma:convSum1}, one knows that  $
   B_n^{{\ep}} \to 0  $ as $ n \to \infty$, for every $\ep>0$.

We build iteratively a non-increasing sequence $(\ep_n)_{n\geq 0} \subset \R^+$, and a sequence of integers $(n_k)_{k\geq 1}$.

 Consider $n_1$ as  the smallest  positive integer such that $B_{n_1}^{\frac{1}{4}} \leq 1$ and set $\ep_n= \frac{1}{2}$ for all $0 \leq n \leq n_1$.  
 
 Next we proceed by induction to build $(\ep_n)_{n \geq 0}$ and $(n_k)_{k\geq 1}$.
 
 Assume that   $n_1<n_2<...<n_p$ are defined.  
 
 Define $n_{p+1}$ as  the smallest integer such that 
\begin{align}
\label{ineqB}
    n_{p} < n_{p+1}  \mbox{ and } B_{n_{p+1} }^{\frac{1}{2^{p}}} \leq \dfrac{1}{2^{p}}.
\end{align}
Put  $\ep_n=\dfrac{1}{2^{p+1} }$ for all $n_{p} < n \leq n_{p+1}$. Finally, let 
\begin{align}
\label{b_n}
    b_n= \min \left\{1/2,{(A_n)^{-(1+\ep_n)}}\right\}.
\end{align}
Then by construction of $\ep_n$, one has:
\begin{itemize}
	\item[(i)] $\ep_n \rightarrow 0$ as $n \rightarrow +\infty$,
	\item[(ii)]  By \eqref{ineqB}, and the fact that $ B_{n_k}^{\frac{1}{2^{k+1}}} \leq  B_{n_k}^{\frac{1}{2^{k}}}\leq 2^{-k-1}$,
	  \begin{align}
	  \noindent
	     \sum_{n \geq 0} b_n \nu_n^s(E)  & \leq   \sum_{n \geq 0} \dfrac{\nu_n^s(E)}{A_n ^{1+ \ep_n}} \leq  \sum_{n= 0}^{n_1} \dfrac{\nu_n^s(E)}{A_n ^{1+\frac{1}{2}}} + \sum_{k \geq1} \sum_{n= n_k+1}^{n_{k+1}} \dfrac{\nu_n^s(E)}{A_n ^{1+ \frac{1}{2^{k+1}}}}  \\ 
	     & \leq 
	 \sum_{n= 0}^{n_1} \dfrac{\nu_n^s(E)}{A_n ^{\frac{3}{2}}} + \sum_{k \geq 1} B_{n_k}^{\frac{1}{2^{k+1}}}  \leq 
	 \sum_{n= 0}^{n_1} \dfrac{\nu_n^s(E)}{\left(A_n \right)^{\frac{3}{2}}} + \sum_{k \geq 1} \dfrac{1}{2^{k-1}} < + \infty. \label{eq2}
  \end{align}
\end{itemize}

\smallskip

Next, we  construct a set $\widetilde{E} \subset E$ such that for all $n\in \N$, one has 
\begin{align*}
    \lvert \nu^{s}_n(\widetilde{E})-b_n\nu^{s}_n(E)\rvert \leq 2^{-ns}.
\end{align*}
To achieve this, observe that by Definition~\ref{S_n}, $S_n $ contains a finite number of lattice points, and denote by $M_{n,d}$ their cardinality. These points are denote by $x_i$ for $i \in \{ 1, \dots, M_{n,d} \}$.

Consider the following function:  
\begin{align*}
	g_n: \left\{0,1, \dots, M_{n,d} \right\} &\longrightarrow \R^+ \\
	m &\longmapsto  \nu^s_n\left(\bigcup_{i=1}^m E \cap B(x_i,1)\right).
\end{align*}
where $g _n(0)=0$ by convention. It is clear that   $g _n$ is non-decreasing, and ranges from   $0$ to $\nu_n^s(E)$. 
Moreover, for all $m \in \left\{1, \dots, M_{n,d}-1 \right\}$, if $\left\{B(y_j,r_j) \right\}_{j=1}^{p}$ is an $s$-optimal cover  of $\ds \bigcup_{i=1}^m E \cap B(x_i,1)$, then $\left\{\left(B(y_j,r_j)\right)_{j=1}^{p}, B(x_{m+1},1) \right\}$ is a proper cover of $\ds \bigcup_{i=1}^{m+1} E \cap B(x_i,1)$  (not necessarily optimal). Using these two covers, one gets
\begin{align*}
	g_{n}(m+1)-g_{n}(m) \leq \left(\sum_{j=1}^{p} \left(\dfrac{r_j}{2^n}\right)^s + \dfrac{1}{2^{ns}}\right)-\sum_{j=1}^{p} \left(\dfrac{r_j}{2^n}\right)^s \leq 2^{-ns}.
\end{align*}
Hence,  $g_{n}$ has only small increments.  

Recalling \eqref{b_n},   $0 = g_n(0)\leq b_n  \nu^{s}_n(E) \leq  \nu^{s}_n(E)= g_n(M_{n,d})$, so there must exist  an integer $m_n \in \{1, \dots, M_{n,d} \} $ such that 
\begin{align*}
	b_n\nu^{s}_n(E) \leq  g_n(m_n) \leq b_n\nu^{s}_n(E) + 2^{-ns}.
\end{align*}
Put 
\begin{align}
    \ds \widetilde{E}_n = \bigcup_{i=1}^{m_n} E \cap B(x_i,1) \mbox{ and } \ds \widetilde{E}= \bigcup_{n \geq 0} \widetilde{E}_n.
\end{align}
Then by construction,  $\widetilde{E} \subset E$, and for all $n \in \N$one has
\begin{align*}
	b_n\nu^{s}_n(E) \leq    \nu^{s}_n(\widetilde{E})  \leq b_n\nu^{s}_n(E) + 2^{-ns}.   \end{align*}
And so, by \eqref{eq2},
\begin{align*}
 \ds  \nu^s(\wE) = \sum_{n \geq 0} \nu^{s}_n(\widetilde{E}) \leq \sum_{n \geq 0} \left(b_n \nu^{s}_n(E) +  2^{-ns}\right) < +\infty.   
\end{align*}
To complete the proof, it is enough to show that for all $\ep > 0 $, $\ds  \nu^{s-\ep}(\widetilde{E})= +\infty$. To this end, fix $\ep>0$, and let $\left(B(x_i,r_i)\right)_{i=1}^{m}$ be an optimal $(s-\ep)$-cover of $\widetilde{E} \cap S_n$, and assume    that for this specific cover, $\beta ^{s-\ep}_n(\widetilde{E})$ is reached, i.e. there exists $i\in \{1,...,m\}$ such that $r_i = 2^n \beta ^{s-\ep}_n(\widetilde{E})$. In particular, $\nu_{n}^{s-\ep}(\widetilde{E}) \geq (\beta ^{s-\ep}_n(\widetilde{E}))^{s-\ep}$.

One sees that 
\begin{align}
\label{eq:lowerbd}
	\nu_{n}^{s-\ep}(\widetilde{E})= \sum_{i=1}^{m } \left(\dfrac{r_i}{2^n}\right)^{s-\ep} \geq \sum_{i=1}^{m} \left(\dfrac{r_i}{2^n}\right)^{s} \cdot (\beta^{s-\ep}_{n}(\widetilde{E}))^{-\ep} \geq (\beta^{s-\ep}_{n}(\widetilde{E}))^{-\ep} \cdot \nu_n^s(\widetilde{E}) .
\end{align}
Two cases are separated. 

On the one hand, If $\beta^{s-\ep}_n(\widetilde{E}) \leq \sqrt[s]{\dfrac{\nu_n^s(E)}{A_n }}$, then      \eqref{eq:lowerbd}  yields 
\begin{align}
	\label{eq:lower_bound1}
	\nu_{n}^{s-\ep}(\widetilde{E})  & \geq 
     \left(\dfrac{A_n }{\nu_n^s(E)}\right)^{\ep/s}\cdot \nu_n^s(\widetilde{E}) \geq
 \left(\dfrac{A_n}{\nu_n^s(E)}\right)^{\ep/s} \cdot b_n \cdot \nu_n^s(E) \\  &\geq \nonumber
	 \dfrac{\left(\nu_n^s(E)\right)^{1-\ep/s}}{A_n^{1+ \ep_n-\ep/s}}\geq 
	 \dfrac{\nu_n^s(E)}{A_n^{1+ \ep_n-\ep/s}}.
\end{align}
where the fact that $\nu_n^s(E) \leq 1$ has been used in the last step. 

On the other hand, if $\beta^{s-\ep}_n(\widetilde{E}) \geq \sqrt[s]{\dfrac{\nu_n^s(E)}{A_n}}$, one has
\begin{align}
	\label{eq:lower_bound2}
	\nu_{n}^{s-\ep}(\widetilde{E}) \geq (\beta ^{s-\ep}_n(\widetilde{E}))^{s-\ep} \geq \dfrac{\left(\nu^s_n(E)\right)^{1 - \ep/s}}{A_n ^{1 - \ep/s}} \geq \dfrac{\nu^s_n(E)}{A_n^{1 - \ep/s}}
\end{align} 
Finally,  using the fact that $\ep_n \to 0$ together with the lower bounds \eqref{eq:lower_bound1} and \eqref{eq:lower_bound2}, one gets that for every large $n$,  $ \nu_{n}^{s-\ep}(\widetilde{E})  \geq \dfrac{\nu^s_n(E)}{A_n}$.     By Lemma \ref{lemma:convSum1},  $\ds \sum_{n \geq 0} \dfrac{\nu^s_n(E)}{A_n} = +\infty$, hence  $\nu^{s-\ep}(\wE) =\sum_{n\geq 0} \nu_{n}^{s-\ep}(\widetilde{E})  =+\infty$.

This holds for every   $\ep>0$,   so $\dhaus{\widetilde{E}}=s$.
\end{proof}

\section{Potential Methods}
\label{sec:potentialMethods}

\subsection{First part of Theorem \ref{thm:potential}}

Consider $E\subset \R^d$, and assume that  there exists a Radon measure $\mu $ on $\R^d$  such that $\mu(E)=+\infty$ and 
		$\ds \sum_{n \geq 0} 2^{ns} I_s(\mu_{|S_n}) < +\infty.$ We prove that    $\nu^s(E)=+\infty$, which implies that $\wnu^s(E)=+\infty$ and     $\dhaus{E}\geq s$.  
		 
 For  $n \in \N$, we write $\mu_n=\mu_{|S_n}$, and define
		\begin{align*}
			& \phi^s_{\mu_n}:= \int_{\R^d} \dfrac{d\mu_n(y)}{\norm{x-y}^s \vee 1 } \quad \mbox{ and }  \ds E_n = \left\{x \in E\cap S_n: \, \max_{r \geq 1} \dfrac{\mu_n\left(B(x,r)\right)}{\left(\frac{r}{2^n}\right)^s} \leq 1\right\}
		\end{align*}
For every $x \in E_n^c$,   there exists an integer $r_x$ such that $\dfrac{\mu_n\left(B(x,r_x)\right)}{\left(\frac{r_x}{2^n}\right)^s} \geq 1$. One has 
		\begin{align*}
			\phi^s_{\mu_n}(x)= \int_{\R^d} \dfrac{d\mu_n(y)}{\norm{x-y}^s \vee 1 } \geq \int_{B(x,r_x)} \dfrac{d\mu_n(y)}{\norm{x-y}^s \vee 1 }\geq \dfrac{\mu_n\left(B(x,r_x)\right)}{r_x^s} \geq \frac{1}{2^{ns}}.
		\end{align*}
		Then $\ds I_s(\mu_n) \geq \int_{E_n^c} \phi^s_{\mu_n}(x) d\mu_n(x) \geq \frac{1}{2^{ns}} \mu_n(E_n^c)$, which implies that 
		\begin{align*}
			\ds \sum_{n \geq 0} \mu_n(E_n^c) \leq \sum_{n\geq 0} 2^{ns} I_s(\mu_n) < +\infty.
		\end{align*} 
		But as $E \cap S_n = E_n \cup E_n^c$ and $\ds \sum_{n\geq 0} \mu_n(E \cap S_n) = +\infty$, then $\ds \sum_{n\geq 0} \mu_n(E_n) = +\infty$.
		Moreover, by Proposition \ref{prop_mu_n} a), one has $\nu^s_n(E_n) \geq  \frac{\mu_n(E_n)}{2^s}$. 
		Finally,  $\nu^s(E) =  \sum_{n \geq 0} \nu^s_n(E_n) = + \infty$ which gives that $\dhaus{E}\geq s$.

\subsection{Second part of Theorem \ref{thm:potential}} 

This is the most delicate part. Assume now that  $  \wnu^s(E) = + \infty$, and fix $0<\ep<s$.

Our goal is to build  a Radon  measure $\mu^{\ep}$ on $\R^d$ such that  $ \mu^{\ep}(E )= +\infty$ and  $\ds \sum_{n\geq 0} 2^{n(s - \ep)} I_{s -\ep}(\mu_{|S_n}^{\ep}) < +\infty$.
 We are going to build each measure $\mu_n^\ep =\mu_{|S_n}^{\ep}$.

 For this, we use the results we previously proved.
 
By Theorem \ref{thm:s-set-2} there exists a set $E_1 \subset E$ such that  $\lim_{n\to +\infty} \sup_{t\in [0,d]} \bnt(E_1)=0$ and $  \nu^s(E_1) =+\infty$.

	Then by Theorem \ref{thm:s-set}, there exists a macroscopic  $s$-set $E_2 \subset E_1$ such that $\dhaus{E_2}= s$ and 
  $\ds  \nu^s(E_2) <+\infty$.

Consider  an optimal   $(s-\frac{\ep}{2})$-cover $\{B(x_i,r_i)\}_{i=1}^m$   of $E_2 \cap S_n$. One sees that 
    \begin{align*}
     \left(\beta^{s-\ep/2}_n(E_2)\right)^{\frac{\ep}{4}} \nu^{ s-\frac{\ep}{2} }_n(E_2 )  =
    	&  \left(\beta^{s-\ep/2}_n(E_2)\right)^{\frac{\ep}{4}} \sum_{i=1}^m \left( \dfrac{r_i}{2^n}\right)^{s-\frac{\ep}{2}} \\ = & \left(\beta^{s-\ep/2}_n(E_2)\right)^{\frac{\ep}{4}} \sum_{i=1}^m \left( \dfrac{r_i}{2^n}\right)^{s-\frac{\ep}{4}} \left( \dfrac{r_i}{2^n}\right)^{-\frac{\ep}{4}} \\ \geq
    	& \sum_{i=1}^m \left( \dfrac{r_i}{2^n}\right)^{s-\frac{\ep}{4}} \geq 
    	 \nu^{ s-\frac{\ep}{4}}_n(E_2 ),
    \end{align*}
    where we used that $ \beta^{s-\ep/2}_n(E_2) \geq  \dfrac{r_i}{2^n}$. 
    Recalling that  $\dhaus{E_2}= s$, it follows that   $\ds \sum_{n \geq}  \left(\beta^{s-\ep/2}_n(E_2)\right)^{\frac{\ep}{4}} \nu^{ s-\frac{\ep}{2} }_n(E_2 ) = +\infty $. Moreover as $E_2 \subset E_1$, then $\beta^{s-\ep/2}_n(E_2) \rightarrow 0$ as $n \rightarrow +\infty$.

   Setting $a_n = \left(\beta^{s-\ep/2}_n(E_2)\right)^{\frac{\ep}{4}}$ and $b_n=\nu_{ s-\frac{\ep}{2} }^n(E_2 )$, one then sees that the sequences $(a_n)_{n\geq 1}$ and $(b_n)_{n\geq 1}$  satisfies the assumptions of     Lemma \ref{lemma:convSum2}. Consider the sequence $(c_n)_{n\geq 1}$ given by this Lemma, and define the set $E_3\subset E_2$ as follows: for every $n\geq 1$,
   \begin{itemize}
   \item 
   if $c_n=0$, then $E_3\cap S_n =\emptyset$,
   
   \item 
   if $c_n=b_n$, then $E_3\cap S_n =E_2\cap S_n$.
   \end{itemize}
   
It is immediate from the construction and   Lemma \ref{lemma:convSum2} that    $c_n = \nu^{s-\ep/2}_n(E_3)$ and 
    \begin{align}
 \nonumber       \sum_{n \geq}  \left(\beta^{s-\frac{\ep}{2}}_n(E_2)\right)^{\frac{\ep}{4}} \nu^{ s-\frac{\ep}{2} }_n(E_3 ) = +\infty \\
\label{eq:4}
     \mbox{ and }    \sum_{n \geq}  \left(\beta^{s-\frac{\ep}{2}}_n(E_2)\right)^{\frac{\ep}{2}} \nu^{ s-\frac{\ep}{2} }_n(E_3 ) < +\infty 
    \end{align}
   Finally, by Proposition \ref{propab}, there exists $\emptyset \neq E_4 \subset E_3 \subset E$ such that for all $n \in\N$,
	\begin{align}
	\label{eq:E_3}
		\frac{4}{5}\nu^{s-\frac{\ep}{2}}_n(E_3) \leq \nu^{s-\frac{\ep}{2}}_n(E_4) \leq \nu^{s-\frac{\ep}{2}}_n(E_3) \\ 
		\mbox{ and }   \ \ \ \ \ 
			\label{eq:E_4}\nu^{s-\frac{\ep}{2}}_n(E_4 \cap B(x,r)) \leq c_s \left(\frac{r}{2^n}\right)^{s-\frac{\ep}{2}}
		\end{align}
		for all $x \in \Z^d$ and $r \geq 1$.

 Define the measures  $\mu_n^{\ep}(A) := \left(\beta^{s-\frac{\ep}{2}}_n(E_2)\right)^{\frac{\ep}{4}} \nu^{s-\frac{\ep}{2}}_n(E_4 \cap A)$.  Then  by our construction and \eqref{eq:E_3}, one has 
    \begin{align*}
	    \sum_{n\geq0} \mu_n^{\ep}(E \cap S_n) =  & \sum_{n\geq0} \left(\beta^{s-\frac{\ep}{2}}_n(E_2)\right)^{\frac{\ep}{4}} \nu^{s-\frac{\ep}{2}}_n(E_4 ) \\\geq 
	    & \frac{4}{5} \sum_{n\geq0} \left(\beta^{s-\frac{\ep}{2}}_n(E_2)\right)^{\frac{\ep}{4}} \nu^{ s-\frac{\ep}{2} }_n(E_3 ) = +\infty.
    \end{align*} 
We are left to prove that 
\begin{align*}
	\sum_{n\geq 0} 2^{n(s-\ep)} I_{s -\ep}(\mu_n^{\ep}) = \sum_{n\geq 0} 2^{n(s-\ep)} \int_{\R^d} \phi^{s -\ep}_{\mu_n^{\ep}}(x) d\mu_n^{\ep}(x)< +\infty
\end{align*}
For $x \in S_n$, one can write \begin{align*}
	\phi_{s-\ep}^{\mu_n^{\ep}}(x)= 
	&\int_{S_n} \dfrac{d\mu_n^{\ep}(y)}{\norm{x-y}^{s-\ep} \vee 1}
\end{align*}
Every   $y \in S_n$ belongs to the ball $ B(x,2^{n+1})$. For $1 \leq r \leq 2^{n+1}$, denote by $ \ds m_n^{\ep}(r)= \mu_n^{\ep}(B(x,r))$. By \eqref{eq:E_4}, one has 
\begin{align}
	\label{eq:m_n}
	m_n^{\ep}(r) = 
	& \left(\beta^{s-\frac{\ep}{2}}_n(E_2)\right)^{\frac{\ep}{4}}\nu^{s-\frac{\ep}{2}}_n\left( E_4 \cap B(x,r)\right) \leq c_s \,\left(\beta^{s-\frac{\ep}{2}}_n(E_2)\right)^{\frac{\ep}{4}} \left(\frac{r}{2^{n}}\right)^{s-\frac{\ep}{2}}.
\end{align}
Using the fact that  $\ds B(x,2^{n+1}) = \bigcup_{r=1}^{2^{n+1}} B(x,r) \setminus B(x,r-1)$, one has 
\begin{align*}
	\phi_{s-\ep}^{\mu_n^{\ep}}(x)  & \leq
 \sum_{r=1}^{2^{n+1}} \int_{B(x,r) \setminus B(x,r-1)} \dfrac{d\mu_n^{\ep}(y)}{\norm{x-y}^{s-\ep} \vee 1} \\
 & = \mu_n^\ep (B(x,1))+ 
	 \sum_{r=2}^{2^{n+1}} \int_{B(x,r) \setminus B(x,r-1)} \dfrac{d\mu_n^{\ep}(y)}{\norm{x-y}^{s-\ep}} .
\end{align*}
One the one hand, by \eqref{eq:E_3}, $\mu_n^\ep (B(x,1)) \leq  c_s \,\left(\beta^{s-\frac{\ep}{2}}_n(E_2)\right)^{\frac{\ep}{4}}  2^{-n(s-\frac{\ep}{2})}$. On the other hand, 
\begin{align*}
& \sum_{r=2}^{2^{n+1}} \int_{B(x,r) \setminus B(x,r-1)} \dfrac{d\mu_n^{\ep}(y)}{\norm{x-y}^{s-\ep}} 	\\
&= 	  \sum_{r=2}^{2^{n+1}} \int_{r-1}^r t^{\ep -s} d m_n^\ep(t) \\
& = 
	  \sum_{r=2}^{2^{n+1}} \left( \left[ t^{\ep -s} m_n^\ep(t)\right]^{r}_{r-1} + (s-\ep)\int_{r-1}^r t^{\ep -s -1} m_n^\ep(t) dt \right) \\ & \leq 
	   c_s \,\left(\beta^{s-\frac{\ep}{2}}_n(E_2)\right)^{\frac{\ep}{4}} \, 2^{-n(s-\frac{\ep}{2})} \sum_{r=1}^{2^{n}}  \left( \left[ t^{\frac{\ep}{2}} \right]^{r}_{r-1} + (s-\ep)\int_{r-1}^r t^{\frac{\ep}{2}-1}dt \right) \\
&  \leq 
	  c_s \left( 1+ 2\frac{s-\ep}{\ep}\right) \left(\beta^{s-\frac{\ep}{2}}_n(E_2)\right)^{\frac{\ep}{4}} \, 2^{-n(s-\frac{\ep}{2})} \sum_{r=1}^{2^{n+1}} \left( r^{\frac{\ep}{2}} - (r-1)^{\frac{\ep}{2}}\right) \\
&  \leq
	  C  \left(\beta^{s-\frac{\ep}{2}}_n(E_2)\right)^{\frac{\ep}{4}} \, 2^{-n(s-\ep)}.
\end{align*}
for some constant $C$.  So 
$$\phi_{s-\ep}^{\mu_n^{\ep}}(x) \leq c_s ,\left(\beta^{s-\frac{\ep}{2}}_n(E_2)\right)^{\frac{\ep}{4}}  2^{-n(s-\frac{\ep}{2})}+ C  \left(\beta^{s-\frac{\ep}{2}}_n(E_2)\right)^{\frac{\ep}{4}} \, 2^{-n(s-\ep)}\leq \widetilde C   \left(\beta^{s-\frac{\ep}{2}}_n(E_2)\right)^{\frac{\ep}{4}} \, 2^{-n(s-\ep)}.$$
 Moving to the integral, one gets  
\begin{align*}
	\ds I_{s -\ep}(\mu_n^{\ep}) = \int_{\R^d} \phi_{s-\ep}^{\mu_n^{\ep}}(x) d\mu_n^{\ep}(x) \leq  C \left(\beta^{s-\frac{\ep}{2}}_n(E_2)\right)^{\frac{\ep}{4}} 2^{-n(s-\ep)} \mu_n^{\ep}(E_4).
	\end{align*}
Finally,  recalling \eqref{eq:4}, \eqref{eq:E_3}, \eqref{eq:E_4} and the definition of $\mu_n^\ep$, one has
\begin{align*}
	\sum_{n\geq 0} 2^{n(s-\ep)} I_{s-\ep}(\mu_n^{\ep}) &  \leq   C\sum_{n\geq 0}  \left(\beta^{s-\frac{\ep}{2}}_n(E_2)\right)^{\frac{\ep}{4}} \mu_n^{\ep}(E_4) \\
	& \leq  \, C \sum_{n\geq 0} \left(\beta^{s-\frac{\ep}{2}}_n(E_2)\right)^{\frac{\ep}{2}} \nu_{ s-\frac{\ep}{2}}^n(E_4) < +\infty
\end{align*}
as desired. 


\section{Projection of a Set}
\label{sec:projection}
In this section we are considering the orthogonal projection of sets in $\R^2$ and we aim at proving  the projection Theorem \ref{thm:projection} for the macroscopic Hausdorff dimension. 

Let  us introduce some notations.

For every $\theta\in [0,2\pi]$, call  $e_\theta=(\cos \theta,\sin\theta)$ the vector with angle $\theta$, and $L_\theta$  the straight  line in $\R^2$ with angle $\theta$    passing through the origin.

Then, recall that  $proj_\theta:\R^2 \to L_\theta$ is  the orthogonal projection onto $L_\theta$.

\subsection{Case where $\dhaus{E}\geq  1$}

Let us start by proving item b) of Theorem \ref{thm:projection}, assuming that item a) is proved.
 
 Consider $E\subset \R^2$ with $\dhaus{E} \geq 1$.
 
 By Theorem \ref{thm:s-set-2},  for every $p\geq 2$, there exists $E_p\subset E$ such that $\dhaus{E_p}=1-1/p$.
 For each set $E_p$, by item a), there exists a set $\Theta_p\subset  [0,\pi]$ of full Lebesgue measure such that for every $\theta \in \Theta_p$, $\dhaus{proj_\theta(E_p)}=1-1/p$. In particular, this implies that $\dhaus{proj_\theta(E)}\geq 1-1/p$. 
 
 Consider now the set $\Theta=\bigcap_{p\geq 2}\Theta_p$. The above arguments show that $\Theta$ is still of full Lebesgue measure in $[0,\pi]$, and that for every $\theta\in \Theta$, $\dhaus{proj_\theta(E)}\geq 1$.  Since obviously $\dhaus{proj_\theta(E)}$ is always less than 1 (since it is included in $L_\theta$), the result follows.
 

\subsection{First extractions when   $\dhaus{E}< 1 $}
 
Fix a set $E\subset \R^2 $ with  $0<\dhaus{E} =s< 1$.  The rest of the section is devoted to prove that   $\dhaus{\proj E} = \dhaus{E}$ for almost every $\theta \in [0,\pi]$.

 \medskip

Writing $L_\theta = \{ \lambda e_\theta: \lambda\in \R\}$, we can define the $n$-th shells inside $L_\theta$ as $S_n^\theta =\{v= (x,y)\in L_\theta: \|v\|_2\in [2^{n-1},2^n]\}$.  Identifying $L_\theta$ with $\R$, the results we obtained before in dimension 1 apply to $L_\theta$ and $S_n^\theta$.

We are going to project 2-dimensional measures onto the lines $L_\theta$. For this, let us define for every $n\geq 0$  the cylinders
\begin{align}
\label{defCn}
    C_n^\theta := \proj^{-1} S_n^\theta.
\end{align}
 
\medskip

We are going to prove that for every $0<\ep<s$, the set 
\begin{equation}
\label{thetaep}
\Theta_{s-\ep}=\{\theta\in [0,\pi]: \dhaus{proj_\theta(E)} \geq s-\ep\}
\end{equation}
  has full Lebesgue measure. The conclusion then follows  using the same argument as the one used to prove item b). More precisely, from the properties above, $\Theta :=\bigcap_{p\geq 1} \Theta_{s-1/p}$  has full Lebesgue measure, and for every $\theta \in \Theta$, $\dhaus{proj_\theta(E)}\geq s$. But  since $proj_\theta$ is a Lipschitz mapping,   $\dhaus{proj _\theta (E)} \leq s=\dhaus{E}$. Finally one gets $\dhaus{\proj E} = \dhaus{E}$ for almost all $\theta \in [0,\pi]$.

\medskip

Fix  $0<\ep<s$. 

Applying Theorem \ref{thm:potential}(2),  there exists a  Borel measure $\mu^\ep$ supported by $E   $ such that  	
\begin{align}
\label{eq11} \sum_{n\geq0} \mu_{n}^{\ep}(E \cap S_n)= +\infty,\\
\label{eq12} \mbox{ and } \ \ \    \sum_{n\geq 0} 2^{n(s - \ep)} I_{s -\ep}(\mu_{n}^{\ep}) < +\infty,
\end{align}
where  $\mu_n ^\ep $ is a simplified notation for $ \mu_{|S_n}^\ep$. Observe that in fact, via the finer Theorem \ref{thm:s-set-2} and Proposition \ref{prop-extract-1},    we can impose that $\lim_{n\to +\infty} \mu_{n}^{\ep}(E \cap S_n) =0$.

    \medskip
    
We need to impose an additional condition on $\mu^\ep$, namely that     
    \begin{equation}
    \label{eq-finale2}
  \sum_{n\geq 0}  2^{-n  } \mu_n^\ep ( {E} \cap S_n)  \left(\sum_{k=0}^n 2^{k } \mu_k^\ep ( {E} \cap S_k) \right)  <+\infty.
  \end{equation}

This is achieved thanks to the following lemma.

	\begin{lemma}
		\label{lemma:convSum3}
		Let $(a_n)_{n \geq 1}$  and $(b_n)_{n \geq 1}$ be two positive sequences converging to zero, such that $\sum_{n\geq 1} a_n =+\infty$ and $\sum_{n\geq 1} a_nb_n=+\infty$. There exists a sequence $(c_n)_{n\geq 1}$ such that:
		\begin{enumerate}
		\item either $c_n =a_n$, or $c_n=0$,
		\item $\sum_{n\geq 1} c_n=+\infty$,
		\item $\sum_{n\geq 1} c_n b_n<+\infty$.
		\end{enumerate}
	\end{lemma}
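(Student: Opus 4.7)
The plan is to mimic the dyadic bucketing strategy used in Lemma \ref{lemma:convSum2}, grouping indices $n$ by the dyadic scale of $b_n$, but carrying out a single truncated greedy selection instead of an iterative construction, because the rigid choice $c_n\in\{0,a_n\}$ turns the problem into just picking a sub-collection of indices.

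Concretely, I will first absorb the finitely many ``large'' indices by setting $c_n=0$ whenever $a_n>1$ or $b_n\geq 1$; since $a_n,b_n\to 0$, only finitely many such $n$ exist, and this does not affect $\sum_n a_n=+\infty$. Writing $D_k=\{n:2^{-k-1}\leq b_n<2^{-k}\}$ for $k\geq 0$, each $D_k$ is finite, and $A_k:=\sum_{n\in D_k}a_n$ satisfies $\sum_k A_k=+\infty$. Within each bucket I apply a truncated greedy selection: if $A_k\leq 1$ I keep all of $D_k$; if $A_k>1$ I order $D_k$ and retain the shortest prefix whose $a_n$-partial sum reaches $1/2$. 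Because each remaining $a_n\leq 1$, this prefix's total $B_k:=\sum_{n\in D'_k}a_n$ lies in $[1/2,3/2]$; in all cases $B_k\leq 3/2$. Finally, set $c_n=a_n$ iff $n\in\bigcup_k D'_k$ and $c_n=0$ otherwise.

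Finiteness of $\sum_n c_n b_n$ will be immediate: for $n\in D'_k\subset D_k$ one has $b_n<2^{-k}$, hence $\sum_n c_n b_n\leq \sum_k 2^{-k}B_k\leq \frac{3}{2}\sum_k 2^{-k}<+\infty$. For divergence of $\sum_n c_n=\sum_k B_k$ I split into two cases: if $A_k\geq 1/2$ for infinitely many $k$, then the corresponding $B_k\geq 1/2$ already force divergence; otherwise $A_k<1/2$ (and hence $B_k=A_k$) for cofinitely many $k$, and $\sum_k B_k\geq \sum_{k\text{ large}}A_k=+\infty$ by the hypothesis $\sum_n a_n=+\infty$. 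The main obstacle is conceptually mild: one cannot freely pick a real cap $B_k=\min(A_k,1)$ but must realize it from an integer subset, yet the uniform bound $a_n\leq 1$ keeps the greedy overshoot at most $1$, which is exactly why the explicit range $[1/2,3/2]$ works. Note that the hypothesis $\sum_n a_n b_n=+\infty$ is not actually used in the proof (if it were finite, $c_n\equiv a_n$ would work trivially); it is stated because this is the substantive regime of application to \eqref{eq-finale2}.
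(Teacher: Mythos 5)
Your argument is correct, and it takes a genuinely different (and more economical) route than the paper. The paper also buckets indices by the dyadic scale of $b_n$, but then runs an iterative construction over \emph{blocks} of buckets, calibrated so that the $k$-th selected block contributes roughly $2^{-k}$ to $\sum c_n b_n$; this uses the divergence of $\sum a_n b_n$ at every step to guarantee that the next threshold can be reached, and then recovers divergence of $\sum c_n$ by noting that $b_n\lesssim 2^{-k}$ throughout block $k$. Your proof instead works \emph{bucket by bucket}: within each $D_k$ you cap the selected $a_n$-mass at essentially $1$ by a truncated greedy prefix (well-defined because $a_n\le 1$ uniformly after discarding finitely many indices, giving $B_k\in[1/2,3/2]$ when you truncate, and $B_k=A_k\le 1$ otherwise). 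Convergence of $\sum c_nb_n$ then falls out of $b_n<2^{-k}$ on $D_k$ together with the uniform bound $B_k\le 3/2$, while divergence of $\sum c_n=\sum_k B_k$ follows from the observation $B_k\ge\min(A_k,1/2)$ and $\sum_k A_k=+\infty$. This buys two things: it avoids any inter-bucket bookkeeping (no recursive choice of $(n_p,j_p)$ pairs with an invariant like the paper's \eqref{eq1bis}), and, as you rightly remark, it shows that the hypothesis $\sum a_nb_n=+\infty$ is not actually needed for the lemma --- it only delimits the nontrivial case, since otherwise $c_n\equiv a_n$ works. The only thing worth making slightly more explicit in a final write-up is the remark that, after discarding the finitely many indices with $a_n>1$ or $b_n\ge 1$, every remaining index lies in exactly one $D_k$, so $\sum_k A_k=\sum_n a_n-(\text{finite})=+\infty$; you state this implicitly and it is correct.
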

		
\begin{proof}
Again, without loss of generality, we assume that $0<a_n,b_n< 1$.
Let us call $D_j=\{n\geq 0: 2^{-j-1}\leq b_n<2^{-j}\}$, for $j\geq 0$.

Put $c_n=0$ for every $n\in D_0 \cup D_1$, and $n_0=0$, $j_0=1$.

We know that $\sum_{j\geq 2} \sum_{n\in D_j} a_n b_n=+\infty$. We go through each $D_j$ in increasing order. Consider the first couple $(n_1,j_1)$ such that  $n_1\in D_{j_1}$ and $\sum_{j= 2}^{j_1-1}  \sum_{n\in D_j} a_n b_n +  \sum_{n\in D_{j_1}, n\leq n_1} a_n b_n  \geq 1/2$. Put $c_n=a_n$ for all $n\in \bigcup_{j= 2}^{j_1-1}D_j \cup \{n\in D_{j_1}: n\leq n_1\} $, and $c_n=0$ for all $n\in   \{n\in D_{j_1}: n> n_1\} $.
By our choice, 
$$1/2\leq \sum_{j= 0}^{j_1}  \sum_{n\in D_j} c_n b_n =  \sum_{j= 2}^{j_1-1}  \sum_{n\in D_j} a_n b_n +  \sum_{n\in D_{j_1}, n\leq n_1} a_n b_n  <1.$$

We then iterate the process:  assume that we have built two finite sequences of integers $(n_k)_{k=1,...,p}$ and $(j_k)_{k=1,...,p}$  such that 
\begin{enumerate} 
\item for $k=1,...,p-1$,  $j_{k+1}> j_k$, and for $k=1,...,p$, $n_k\in D_{j_k}$
\item for $k=1,...,p$, $c_n = a_n$ if $n\in \bigcup_{j= j_{k-1}}^{j_k-1}D_j \cup \{n\in D_{j_k}: n\leq n_k\} $, and $c_n=0$ for all $n\in   \{n\in D_{j_k}: n> n_k\} $. 
\item for $k=1,...,p$, one has 
\begin{equation}
\label{eq1bis}
2^{-k} \leq \sum_{j= j_{k-1}}^{j_k}  \sum_{n\in D_j} c_n b_n  < 2^{-k+1}. \end{equation}
\end{enumerate}

We know that $\sum_{j\geq j_p+1} \sum_{n\in D_j} a_n b_n=+\infty$.  Consider the first couple $(n_{p+1} ,j_{p+1})$ such that  $n_{p+1} \in D_{j_{p+1}}$ and $\sum_{j= j_{p}}^{j_{p+1}-1}  \sum_{n\in D_j} a_n b_n +  \sum_{n\in D_{j_{p+1}}, n\leq n_{p+1}} a_n b_n  \geq 2^{-(p+1)}$. Put $c_n=a_n$ for all $n\in \bigcup_{j= j_p}^{j_{p+1}-1}D_j \cup \{n\in D_{j_{p+1}}: n\leq n_{p+1}\} $, and $c_n=0$ for all $n\in   \{n\in D_{j_{p+1}}: n> n_{p+1}\} $. Then, since for all the selected integers $n$, $a_nb_n\leq 2^{-j_{p+1}}\leq 2^{-(p+1)}$,  \eqref{eq1bis} holds true.

\medskip

Collecting the information, on one hand one has by \eqref{eq1bis} 
$$\sum_{n\geq 0} c_n b_n = \sum_{k\geq 1}  \sum_{j= j_{k-1}}^{j_k}  \sum_{n\in D_j} c_n b_n \leq \sum_{k\geq 1} 2^{-k+1}<+\infty.$$
On the other hand, since $j_k \geq k+1$, one sees that  
for each $n\in D_j$ for $j\in \{j_{k-1},...j_k\}$, $b_n \leq 2^{-k}$,  so again by \eqref{eq1bis},
$$\sum_{n\geq 0} c_n = \sum_{k\geq 1}  \sum_{j= j_{k-1}}^{j_k}  \sum_{n\in D_j} c_n  \geq  \sum_{k\geq 1} 2^k  \sum_{j= j_{k-1}}^{j_k}  \sum_{n\in D_j} c_n b_n \geq \sum_{k\geq 1} 1=+\infty,$$
hence the result.
 \end{proof}

 Setting $a_n=\mu_n^\ep(  E)$, then   $(a_n)_{n\geq 0}$ tends to zero when $n$ tends to infinity. Define then
  $$b_n = 2^{-n  }  \sum_{k=0}^n 2^{k } a_k.$$
  Since $\sum_{k=0}^n 2^{k } \sim 2^{n }$, $(b_n)_{n\geq 0}$ is a generalized  Caesaro mean associated with the sequence $(a_n)_{n\geq 0}$, and converges to zero when $n$ tends to infinity. 
  
  So either $\sum _{n\geq 1} a_nb_n<+\infty$, and \eqref{eq-finale2} is true, or $\sum _{n\geq 1} a_nb_n =+\infty$ and we are exactly in the situation of Lemma \ref{lemma:convSum3}: there exists a sequence    $(c_n)_{n\geq 1}$ such that:
		\begin{enumerate}
		\item either $c_n =a_n$, or $c_n=0$,
		\item $\sum_{n\geq 1} c_n=+\infty$,
		\item $\sum_{n\geq 1} c_n b_n<+\infty$.
		\end{enumerate}
Setting $\widetilde {E}= \bigcup_{n\geq 0: a_n=c_n}  {E}\cap S_n $, by construction one has $\mu^\ep( \widetilde {E}) = \sum_{n\geq 1} c_n=+\infty$, and since $\mu^\ep_k(\widetilde {E}\cap S_k) =c_k\leq a_k=\mu^\ep_k(  {E} \cap S_k)  $, one has
    $$ \sum_{n\geq 0}  2^{-n  } \mu_n^\ep (\widetilde{E}  \cap S_n)  \left(\sum_{k=0}^n 2^{k } \mu_k^\ep (\widetilde{E}  \cap S_k) \right)  \leq \sum_{n\geq 1} c_n b_n <+\infty,$$
    hence \eqref{eq-finale2} is obtained for $\widetilde{E}$.
This property will be used at the very end of  the proof of Proposition \ref{prop-proj-pot} only. It is obvious that if  Theorem \ref{thm:projection} is proved for this smaller set $E$, it is also true for the original set.

\medskip
    
Finally, observe   that, replacing $\widetilde{E}$ by $\bigcup_{n\geq 0} \widetilde{E} \cap S_{2n}$ or $\bigcup_{n\geq 0} \widetilde{E} \cap S_{2n+1}$, one can assume in addition to \eqref{eq11}, \eqref{eq12}  and \eqref{eq-finale2} that 
\begin{equation}
\label{eq-separation}
\mbox{if } S_n\neq \emptyset, \ \mbox{ then } S_{n-1}=S_{n+1}=\emptyset.
\end{equation}

To resume this section, we have proved that the original set $E$ contains a subset, still denoted by $E$ for simplification,  and a measure $\mu^\ep$ supported by $E$ such that \eqref{eq11}, \eqref{eq12}, \eqref{eq-finale2} and \eqref{eq-separation} simultaneously hold. 

\subsection{Final proof of item a) of Theorem \ref{thm:projection}}

Consider the set $E$ obtained after extraction above.
 For all $\theta\in [0,\pi]$, $k \geq n$ and $A \subset L_\theta$, we focus on  the restriction of $\mu_k^\ep$ on $C_n(\theta)$ 
 \begin{align*}
 (	\mu_k^{\ep} )_{|C_n^\theta}(A) := \mu_k^{\ep} (\left\{x \in E \cap S_k \, :\, \proj x\in A \cap S_n^\theta \right\}),
 \end{align*}
Equivalently for each non-negative function $f$, one has 
 \begin{align*}
 	\int_{-\infty}^{+\infty} f(t) d (\mu_k^{\ep}  )_{|C_n^\theta}(t) \, =\, \int_{C_n^\theta \cap S_k} f  (x.e_\theta) d\mu_k^{\ep}(x).
 \end{align*}
 where   $x.e_\theta$ denotes the scalar product. Since $e_\theta$ is unitary,   we identify $x.e_\theta $ with $ \proj x $,   the orthogonal projection  of $x$ onto $ L_\theta$. 
 
 \begin{defn}
 The projected measure $\mu^{\ep,\theta}$ is defined as $ \mu^{\ep,\theta} =\sum_{n\geq 1} \mu_n^{\ep,\theta}$, where 
\begin{align}
\label{defmuep}
	\mu_n^{\ep,\theta} = \sum_{k \geq n}   (\mu_k^\ep)_{|C_n^\theta}.
\end{align}
\end{defn}
Note that  each $\mu_n^{\ep,\theta}$ is a measure supported on $\proj E \cap S_n^\theta$.

We are going to prove that  for almost all $\theta \in [0,\pi]$,
 \begin{align}
 \label{eq:dim_hproj}
 	\sum_{n\geq 0} \mu_n^{\ep,\theta}(\proj E  ) = +\infty \mbox{ and  } \sum_{n\geq 0} 2^{n (s-\ep)} I_{s-\ep}(\mu_n^{\ep,\theta}) < \infty.
 \end{align}
 for almost all $\theta \in [0,\pi]$. Then item a) of Theorem \ref{thm:potential} will allow us to conclude that the set $\Theta_{s-\ep}$ defined by \eqref{thetaep} has full Lebesgue measure, as announced.

 This is the purpose of the next two propositions.
 \begin{prop}
  For every $\theta\in[0,\pi]$, 
 \begin{equation}
 \label{ineg20}
  \mu^{\ep,\theta}(\proj E  ) = +\infty.
 \end{equation}
 \end{prop}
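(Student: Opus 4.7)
The plan is a direct unpacking of the definition of $\mu^{\ep,\theta}$ followed by an exchange of the two sums. The central geometric observation is that orthogonal projection is 1-Lipschitz, so $\proj$ maps $S_k$ inside $B(0,2^k)\cap L_\theta$, which is the union of the shells $S_0^\theta,\dots,S_k^\theta$. Hence each $x\in E\cap S_k$ contributes to some $(\mu_k^\ep)_{|C_n^\theta}$ with $n\le k$, and every such level $n$ is included in the definition of $\mu_n^{\ep,\theta}$.

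First, using the definition of $\mu^{\ep,\theta}$ and of $(\mu_k^\ep)_{|C_n^\theta}$, together with the fact that for any $x\in E$ we automatically have $\proj x\in \proj E$, I would rewrite
\begin{align*}
\mu^{\ep,\theta}(\proj E)
=\sum_{n\ge 1}\sum_{k\ge n}\mu_k^\ep\bigl(\{x\in E\cap S_k\,:\,\proj x\in S_n^\theta\}\bigr).
\end{align*}
All terms are nonnegative, so Tonelli permits exchanging the order of summation. Since $|\proj x|\le |x|\le 2^k$ whenever $x\in S_k$, the inner sum over $n=1,\ldots,k$ collects precisely the $\mu_k^\ep$-mass of points in $E\cap S_k$ whose projection has modulus at least $1$:
\begin{align*}
\mu^{\ep,\theta}(\proj E)=\sum_{k\ge 1}\mu_k^\ep\bigl(\{x\in E\cap S_k\,:\,|\proj x|\ge 1\}\bigr).
\end{align*}

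To conclude, I would compare this to the full mass, which satisfies $\sum_{k\ge 1}\mu_k^\ep(E\cap S_k)=+\infty$ by \eqref{eq11}. Thus it suffices to prove that the complementary (``lost'') series
\begin{align*}
\sum_{k\ge 1}\mu_k^\ep\bigl(\{x\in E\cap S_k\,:\,|x\cdot e_\theta|<1\}\bigr)
\end{align*}
is finite for every $\theta\in[0,\pi]$. This is the step I expect to be the main obstacle. The set in question is the intersection of $S_k$ with a strip of width $2$ about the line through $0$ orthogonal to $e_\theta$; geometrically this is a negligible portion of a large annulus. The natural control comes from the energy estimate \eqref{eq12}: a concentration of $\mu_k^\ep$ on such a thin strip would force its macroscopic $(s-\ep)$-energy to be of order $2^{-n(s-\ep)}$ times a factor much larger than $\mu_k^\ep(E\cap S_k)^2/2^k$, contradicting the summability of $2^{n(s-\ep)}I_{s-\ep}(\mu_n^\ep)$. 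Once the loss is bounded, the divergence of $\sum_k\mu_k^\ep(E\cap S_k)$ passes to $\mu^{\ep,\theta}(\proj E)$, establishing \eqref{ineg20} for every $\theta$.
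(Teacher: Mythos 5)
Your decomposition is the right one and matches the paper's idea: write $\mu^{\ep,\theta}(\proj E)$ as a double sum over $n$ and $k$, exchange the order by Tonelli, and group the inner sum over $n$ for fixed $k$ using the fact that for $x\in S_k$ one has $|\proj x|\leq\|x\|\leq 2^k$, so $x$ lies in $C_n^\theta$ for exactly one $n\leq k$. Up to this point you and the paper agree. You have, however, spotted a genuine (minor) inconsistency in the paper itself: the definition of $\mu^{\ep,\theta}$ sums over $n\geq 1$, while the paper's proof of this proposition silently starts the sum at $n\geq 0$ and asserts that the cylinders cover $\R^2$. With the sum starting at $n\geq 1$ you are correct that a residual term, the mass of $\{x\in E\cap S_k:|x\cdot e_\theta|<1\}$, is dropped.

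The gap is in your proposed repair. You suggest controlling the dropped series by the energy bound \eqref{eq12}, but this cannot work, because the proposition is asserted for \emph{every} $\theta\in[0,\pi]$, not almost every $\theta$. The paper itself supplies the counterexample to your strategy in the very next paragraph: if $E$ (and hence $\mu^\ep$) is carried by a line of angle $\phi$ through the origin and $\theta=\phi+\pi/2$, then every point of $E$ projects into $B(0,1)$, so the ``lost'' series equals the full divergent series $\sum_k\mu_k^\ep(E\cap S_k)$. No estimate that is only an $L^1(d\theta)$ or almost-everywhere statement, such as one derived from \eqref{eq12}, can rule this out for a fixed $\theta$. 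The correct resolution is the one the paper uses implicitly: include the shell $S_0^\theta=B(0,1)\cap L_\theta$ (equivalently, the cylinder $C_0^\theta$) in the decomposition, so that $\bigcup_{n\geq 0}C_n^\theta=\R^2$ and the inner sum over $n=0,\ldots,k$ recovers all of $\mu_k^\ep(E\cap S_k)$ with no loss. Then the computation is an exact identity, valid for every $\theta$, with no appeal to the energy; the energy estimate is reserved for the almost-everywhere statement in Proposition \ref{prop-proj-pot}, which is where degenerate angles are legitimately excluded.
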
 
\begin{proof}
This simply follows from the observation that 
$$   \mu^{\ep,\theta}(\proj E  ) =	\sum_{n\geq 0} \mu_n^{\ep,\theta}(\proj E  )= \sum_{n\geq 0}   \sum_{k \geq n}   (\mu_k^\ep)_{|C_n^\theta} (E) \geq  	\sum_{n\geq 0} \mu_n^{\ep}(  E  )=+\infty,$$
since the union of the $(C_n^\theta)_{n\geq 1}$ cover $\R^2$ (there are small overlaps (their borders) between the $C^\theta_n$). Hence the result.
\end{proof}

So the first part of \eqref{eq:dim_hproj} is proved.

Let us move to the second part. Observe that even if $ \mu^{\ep,\theta}(\proj E  ) = +\infty$, it is likely that $\proj E$ has dimension less than $\dhaus{E}$. A trivial example is when the $s$-dimensional set  $E$ is included in a straight line of angle $\phi$ passing through 0, and $\theta=\phi+\pi/2$.

 \begin{prop}
 \label{prop-proj-pot} One has
 \begin{equation}
 \label{ineg21}
\E_\theta \left[\sum_{n\geq 0} 2^{n (s-\ep)} I_{s-\ep}(\mu_n^{\ep,\theta}) \right]  <+\infty . \end{equation}
 \end{prop}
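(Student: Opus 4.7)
The plan is a Fubini argument splitting the contribution of pairs $(x,y)$ into a diagonal part ($k=k'$) and an off-diagonal part ($k\neq k'$, for which $|k-k'|\geq 2$ by \eqref{eq-separation}), handled with two different upper bounds on the angular integral. Setting $t=s-\ep\in(0,1)$ and expanding \eqref{defmuep}, the swap of sums and integrals gives
$$\E_\theta\!\left[\sum_{n\geq 0} 2^{nt}I_t(\mu_n^{\ep,\theta})\right]=\frac{1}{\pi}\sum_{n\geq 0}2^{nt}\sum_{k,k'\geq n}\iint_{S_k\times S_{k'}}K_n(x,y)\,d\mu_k^\ep(x)\,d\mu_{k'}^\ep(y),$$
where $\displaystyle K_n(x,y):=\int_0^\pi \mathbf{1}_{C_n^\theta}(x)\,\mathbf{1}_{C_n^\theta}(y)\,\frac{d\theta}{|(x-y)\cdot e_\theta|^t\vee 1}$.

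For the diagonal, I would drop both indicators and use the Kaufman-type bound
$$\int_0^\pi \frac{d\theta}{|w\cdot e_\theta|^t\vee 1}\leq \frac{C_t}{\norm{w}^t\vee 1}\qquad(0<t<1),$$
which follows from the integrability of $|\cos\psi|^{-t}$ by separating the regions $|w\cdot e_\theta|\gtrless 1$. Summing yields $C'\sum_k 2^{kt}I_t(\mu_k^\ep)<+\infty$ by \eqref{eq12}. For the off-diagonal, one may assume $k\geq k'+2$, so $\norm{y}\leq 2^{k'}\leq \norm{x}/2$ and $\norm{x}\geq 2^{n+1}$; the key ingredient is then the refined estimate
\begin{equation*}
	\int_0^\pi \mathbf{1}_{C_n^\theta}(x)\,\frac{d\theta}{|(x-y)\cdot e_\theta|^t\vee 1}\leq \frac{C_t\cdot 2^{n(1-t)}}{\norm{x}}.
\end{equation*}
To prove it, observe that $\mathbf{1}_{C_n^\theta}(x)=1$ confines $\theta$ to two arcs of length at most $C\cdot 2^n/\norm{x}$ around $\phi_x\pm\pi/2$, where $\phi_x=\arg(x)$. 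On each arc, parametrizing $\theta=\phi_x\pm\pi/2+\delta$ (with $|\delta|\leq 1/2$), one computes $(x-y)\cdot e_\theta=-(\norm{x}-\alpha)\sin\delta-\beta\cos\delta$ where $\alpha,\beta$ are the components of $y$ in $(e_{\phi_x},e_{\phi_x+\pi/2})$; its derivative in $\delta$ is bounded below by $c\norm{x}$ since $\norm{y}\leq\norm{x}/2$. The change of variable $u=(x-y)\cdot e_\theta$ transforms the integral into $\int du/(|u|^t\vee 1)$ over a $u$-interval of length at most $C\cdot 2^n$, producing $C\cdot 2^{n(1-t)}$, with Jacobian $d\theta\leq C\,du/\norm{x}$.

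Plugging this in with $\norm{x}\geq 2^{k-1}$, the off-diagonal contribution is bounded by
$$\sum_{n\geq 0}2^{nt}\!\!\sum_{k\geq k'+2,\,k'\geq n}\!\!C\cdot 2^{n(1-t)-k}a_k a_{k'}\leq C\!\!\sum_{k>k'+1}\!\!2^{-k}a_k a_{k'}\sum_{n=0}^{k'}2^n\leq C'\!\sum_k 2^{-k}a_k\!\!\sum_{k'\leq k}\!\!2^{k'}a_{k'},$$
where $a_n=\mu_n^\ep(E\cap S_n)$, and the last double sum is precisely what \eqref{eq-finale2} makes finite. Fubini then yields \eqref{ineg21}. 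The main obstacle is the refined estimate: the separation \eqref{eq-separation} is essential to ensure the linearized denominator is nondegenerate, and it is exactly this gain in the angular direction that makes the specific weight $2^{k-n}$ in \eqref{eq-finale2} sufficient (the plain Kaufman bound alone would require finiteness of $\sum_{k'<k}2^{-(k-k')t}a_k a_{k'}$ with $t<1$, which is strictly stronger than \eqref{eq-finale2}).
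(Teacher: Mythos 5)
Your proof is correct and follows the same global strategy as the paper: Fubini to move the $\theta$-integral inside, a split into diagonal $(k=k')$ and off-diagonal $(k\ne k')$ pairs, and the reduction to the hypotheses \eqref{eq12} and \eqref{eq-finale2}. The technical execution of the angular estimates differs, though, and in a way worth recording. For the diagonal term the paper retains the indicators $\mathds{1}_{C_n^\theta}(x)\mathds{1}_{C_n^\theta}(y)$, localizes $\theta$ in an arc of length $O(2^{n-k})$ via Lemma~\ref{lemma:n,k}, and integrates $|\cos|^{-t}$ near $\pi/2$ to get the factor $2^{(n-k)(1-t)}$; you instead discard both indicators and apply the clean Kaufman bound $\int_0^\pi\frac{d\theta}{|w\cdot e_\theta|^t\vee 1}\leq C_t/(\norm{w}^t\vee 1)$, so the $n$-sum telescopes to $\sum_k 2^{kt}I_t(\mu_k^\ep)$ directly — this is a genuine simplification. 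For the off-diagonal term the paper performs two separate estimates (angular localization giving $2^{(n-k)(1-t)}$, and the separation \eqref{eq-separation} giving $\norm{x-y}\sim 2^k$ to control the $1/\norm{x-y}^{t}$ factor), whereas you package both into a single refined bound $\int_0^\pi\mathbf{1}_{C_n^\theta}(x)\frac{d\theta}{|(x-y)\cdot e_\theta|^t\vee 1}\leq C\,2^{n(1-t)}/\norm{x}$, proved by a change of variable whose Jacobian is nondegenerate precisely because $\norm{y}\leq\norm{x}/2$ (a consequence of the same separation). The two routes produce identical arithmetic and the same appeal to \eqref{eq-finale2}, but your version makes transparent exactly where and why the separation is needed, and your closing remark — that the plain Kaufman bound applied to the off-diagonal would require $\sum_{j<k}2^{-(k-j)t}a_ka_j<\infty$, strictly stronger than \eqref{eq-finale2} when $t<1$ — is a useful sanity check that the paper leaves implicit.
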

 
\begin{proof} Remark that if \eqref{ineg21} is proved, then $\sum_{n\geq 0} 2^{n (s-\ep)} I_{s-\ep}(\mu_n^{\ep,\theta}) <+\infty$ for Lebesgue almost every $\theta\in [0,\pi]$, so \eqref{eq:dim_hproj} and item a) of Theorem \ref{thm:projection} are proved.

\medskip

 We start with the following lemma.
	\begin{lemma}
		\label{lemma:n,k} There exists a constant $C_0>0$ such that  the following holds.
		Let $x \in S_k$ for some $k \geq 0$. For all $0\leq n \leq k$, the set   $J_{x,n}=\{\theta\in [0,\pi] : x \in C_k^\theta\}$ is an interval modulo $\pi$, and   $|J_{x,n}| \leq  C_0 2^{n-k} $.
	\end{lemma}
	\begin{proof}
	
	The fact that $J_{x,k}$ is an interval is obvious.
	
		Let $x=(u,v) \in S_k$.  We   study the case where $x_1\geq 0$, the case $x_1 < 0$ being symmetric.  Using polar coordinates, one has $x=(r \cos \theta_0, r \sin \theta_0)$ for some $2^{k-1} \leq r \leq 2^{k}$ and $\theta_0 \in [-\frac{\pi}{2},\frac{\pi}{2}]$. Then the projection of $x$ on $L_\theta$ is given by: 
		\begin{align*}
			\proj x = ( r \cos (\theta - \theta_0) \cos \theta , r \cos (\theta - \theta_0) \sin \theta ).
		\end{align*}
		Recall  \eqref{defCn}, one sees that  for $0\leq n\leq k$, 
		\begin{align*}
			x \in C_n^\theta  & \iff 
			  2^{n-1} \leq r \cos (\theta - \theta_0) \leq 2^{n} \\
			  &  \iff 
			  \dfrac{2^{n-1}}{r} \leq  \cos (\theta - \theta_0) \leq \dfrac{2^{n}}{r} \\ 
			  & \iff   2^{n-k} \leq \cos (\theta - \theta_0) \leq \min \{1, 2^{n-k+1}\} \\
			  & \iff 
			\theta \in \left[ \theta_0 +\arccos\left(  2^{n-k}\right), \theta_0 + \arccos\left(\min \{1, 2^{n-k+1}\} \right)\right] \mod \pi .
		\end{align*}
		Denote by $J_{n,x}:= \left[  \theta_0 +\arccos\left(\dfrac{1}{2} 2^{n-k}\right),  \theta_0 + \arccos\left(\min \{1, 2^{n-k+1}\} \right)\right]$. The Taylor development   $\arccos(y) = \frac{\pi}{2} -y + o(y)$ yields that  $|J_{n,x}| = 2^{n-k}(1+o(1))$.
	\end{proof}
	
From the proof, it also follows that 	$|J_{x,n}| \sim   C 2^{n-k} $ when $n/k$ is quite small.

\medskip

Let us study \eqref{ineg21}. One has   \begin{align*}
&  \E_\theta \left[\sum_{n\geq 0} 2^{n (s-\ep)} I_{s-\ep}(\mu_n^{\ep,\theta}) \right] \\
 	& =\int_0^\pi \left[\sum_{n\geq 0} 2^{n(s-\ep)} I_{s-\ep}(\mu_n^{\ep,\theta}) \right] d\theta \\  
 	& = \int_0^\pi \left[ \sum_{n\geq 0} 2^{n(s-\ep)}\int_{S_n^\theta} \int_{S_n^\theta} \dfrac{d \mu_n^{\ep,\theta}(u)\, d \mu_n^{\ep,\theta}(v) }{|u-v|^{s-\ep} \vee 1}\right] d\theta \\
	& = \int_0^\pi \left[ \sum_{n\geq 0} 2^{n(s-\ep)}  \sum_{j,k \geq n}    \int_{E \cap S_j \cap C_n^\theta} \int_{E \cap S_k\cap C_n^\theta} \dfrac{d \mu_k^\ep(x)\, d \mu_j^\ep(y) }{|x \cdot e_ \theta- y \cdot e_\theta |^{s-\ep} \vee 1}\right] d\theta \\
	 & := I_1 + 2I_2
 \end{align*}
 where
  \begin{align*}
 	I_1 & = \int_0^\pi \left[ \sum_{n\geq 0}  2^{n(s-\ep)}  \sum_{k\geq n} \iint_{(E \cap S_j \cap C_n^\theta)^2}  \dfrac{d \mu_k^\ep(x)\, d \mu_k^\ep(y) }{|(x-y) \cdot e_\theta|^{s-\ep} \vee 1}\right] d\theta \\  
	 I_2	& = \int_0^\pi \left[ \sum_{n\geq 0} 2^{n(s-\ep)} \sum_{ k> j \geq n} \int_{E \cap S_j \cap C_n^\theta} \int_{E \cap S_k \cap C_n^\theta}  \dfrac{d \mu_k^\ep(x)\, d \mu_j^\ep(y) }{|(x-y) \cdot  e_\theta|^{s-\ep} \vee 1}\right] d\theta.
 \end{align*}
 Starting with $I_1$, one has  
 \begin{align*}
 	I_1= 
 	& \int_0^\pi \left[ \sum_{n\geq 0}  2^{n(s- \ep)}  \sum_{k\geq n}   \iint_{(E \cap S_k \cap C_n^\theta)^2}    \dfrac{d \mu_k^\ep(x)\, d \mu_k^\ep(y) }{|(x-y) \cdot e_\theta|^{s-\ep} \vee 1}\right] d\theta \\ =
 	& \int_0^\pi \left[ \sum_{n\geq 0}  2^{n(s- \ep)}  \sum_{k\geq n}  \iint_{(E \cap S_k  )^2}   \dfrac{ \mathds{1}_{C_n^\theta}(x) \mathds{1}_{ C_n^\theta}(y) }{|(x-y) \cdot e_ \theta|^{s-\ep} \vee 1}  d \mu_k^\ep(x)\, d \mu_k^\ep(y)\right] d\theta \\ =
 	& \sum_{n\geq 0}  2^{n(s- \ep)}  \sum_{k\geq n}    \iint_{(E \cap S_k  )^2}   \int_0^\pi \dfrac{ \mathds{1}_{C_n^\theta}(x) \mathds{1}_{ C_n^\theta}(y)  }{|(x-y) \cdot e_ \theta|^{s-\ep} \vee 1} d\theta d \mu_k^\ep(x)\, d \mu_k(y)  \\ 
	 \leq 
 	& \sum_{n\geq 0}  2^{n(s-\ep)}  \sum_{k\geq n}    \iint_{(E \cap S_k  )^2}   \left[\int_0^\pi \dfrac{ \mathds{1}_{x  \in C_n^\theta} (\theta)\mathds{1}_{y \in C_n^\theta}(\theta) }{\lvert \tau_{x-y} \cdot  e_\theta \rvert^{s-\ep} }  d\theta\right] \dfrac{d \mu_k^\ep(x)\, d \mu_k^\ep(y) }{\norm{x-y}^{s-\ep} \vee 1},
 \end{align*}
 where $\tau_{x-y}$ is the unit vector in the direction of $x-y$.  By Lemma \ref{lemma:n,k}, when $x\in  S_k$ one has $\mathds{1}_{x  \in C_n^\theta}(\theta) = \mathds{1}_{J_{n,x} }(\theta) $. Then
 \begin{align*}
 	\int_0^\pi \dfrac{\mathds{1}_{x  \in C_n^\theta}(\theta) \mathds{1}_{y \in C_n^\theta}(\theta)}{\lvert \tau_{x-y} \cdot  e_\theta \rvert^{s-\ep} }  d\theta = 
 	& \int_{J_{n,x}  \cap J_{n,y} } \dfrac{d\theta }{\lvert \cos(\widehat{\tau_{x-y}, e_\theta}) \rvert^{s-\ep} }.
 \end{align*}
 By  Lemma \ref{lemma:n,k},  the interval $J_{n,x}  \cap J_{n,y} $ has length smaller than $C_0 2 ^{n-k}$. So the integral above is taken over an   interval of length at most $C_0 2 ^{n-k}$. Moreover, as $s<1$, the integral reaches its largest value when  $\theta$ close to $\dfrac{\pi}{2}$. Thus 
 \begin{align}
 	\label{eq:int_t}
 	\int_0^\pi \dfrac{\mathds{1}_{x  \in C_n^\theta}(\theta) \mathds{1}_{y \in C_n^\theta}(\theta)}{\lvert \tau_{x-y} \cdot e_ \theta \rvert^{s-\ep} }  d\theta \leq  
 	& \int_{\frac{\pi}{2}- C_0 2 ^{n-k} }^{\frac{\pi}{2}+ C_0 2 ^{n-k} } \dfrac{d\theta }{\lvert \cos(\theta) \rvert^{s-\ep} }
	 \leq  \int_{- C_0 2 ^{n-k} } ^{C_0 2 ^{n-k}}   \dfrac{d\theta }{\lvert \theta \rvert^{s-\ep} }= C  2^{(n-k)(1-s+\ep)}.
 \end{align}
 where $C>0$ is some positive constant. Then going back to $I_1$ and using \refeq{eq:int_t}, one gets
 \begin{align*}
 	I_1
 	&  \leq  C \sum_{n\geq 0}  2^{n(s-  \ep)}  \sum_{k\geq n} 2^{ (n-k)(1-s+\ep)} \iint_{(E \cap S_k)^2} \dfrac{d \mu_k^\ep(x)\, d \mu_k^\ep(y) }{\norm{x-y}^{\ep} \vee 1} \\
 	& = C \sum_{n\geq 0} \sum_{k \geq n} 2^{n+k(s+ \ep -1)}  \iint_{(E \cap S_k)^2} \dfrac{d \mu_k^\ep(x)\, d \mu_k^\ep(y) }{\norm{x-y}^{s-\ep} \vee 1 } \\
 	&  = C \sum_{n\geq 0}2^{n(s+\ep -1)} \sum_{k=0}^n 2^{k}  \iint_{(E \cap S_n)^2} \dfrac{d \mu_n^\ep(x)\, d \mu_n^\ep(y) }{\norm{x-y}^{s-\ep}  \vee 1 } \\ 
 	& \leq 2 C \sum_{n\geq 0} 2^{n(s-\ep)}  I_{s-\ep}(\mu_n^\ep)  <+\infty,
 \end{align*}
which is finite by \eqref{eq12}.

\medskip

Moving to $I_2$, the same manipulations as above for $I_1$ yield 
 \begin{align*}
 	I_2
	& = \int_0^\pi \left[ \sum_{n\geq 0} 2^{n(s- \ep)} \sum_{ k> j \geq n} \int_{E \cap S_j \cap C_n^\theta} \int_{E \cap S_k \cap C_n^\theta}  \dfrac{d \mu_k^\ep(x)\, d \mu_j^\ep(y) }{|(x-y) \cdot e_ \theta|^{s-\ep} \vee 1}\right] d\theta.\\
& =  \int_0^\pi \left[ \sum_{n\geq 0}  2^{n(s- \ep)}   \sum_{ k> j \geq n}  \int_{E \cap S_j} \int_{E \cap S_k} \dfrac{ \mathds{1}_{C_n^\theta}(x) \mathds{1}_{C_n^\theta}(y) }{|(x-y) \cdot  e_\theta|^{s-\ep} }  d \mu_k^\ep(x)\, d \mu_j^\ep(y)\right] d\theta \\	& = 
 	 \sum_{n\geq 0}  2^{n(s- \ep)}  \sum_{ k> j \geq n}   \int_{E \cap S_j} \int_{ E \cap S_k}  \left[\int_0^\pi \dfrac{ \mathds{1}_{J_{n,x} }(\theta)   \mathds{1}_{J_{n, y} }(\theta)     }{\lvert \tau_{x-y} \cdot  e_\theta \rvert^{s-\ep} }  d\theta\right] \dfrac{d \mu_k^\ep(x)\, d \mu_j^\ep(y) }{\left\lVert x-y \right\rVert_2 ^{s-\ep} } . 
 	 \end{align*}
As before, by Lemma \ref{lemma:n,k}, $\lvert  J_{k,x} \rvert \leq 2^{n-k}$  and $\lvert  J_{j,y} \rvert \leq 2^{n-j}$ for all $x \in S_k\cap C_n^\theta $  and $ y \in S_j \cap C_n^\theta$). Then, as $k \geq j+1$, 	the same argument as in \eqref{eq:int_t} yields
	\begin{align}
		\label{eq:intkj}
		\int_0^\pi \dfrac{\mathds{1}_{x  \in C_n^\theta}(\theta) \mathds{1}_{y \in C_n^\theta}(\theta)}{\lvert \tau_{x-y} \cdot  e_\theta \rvert^{s-\ep} }  d\theta \leq C 2^{(n-k)(1-s+\ep)}.
	\end{align}
	for some $C >0$.

Next, we make use of equation \eqref{eq-separation} : indeed, it is not possible that $\mu_j^\ep$ and $\mu_{j+1}^\ep$ are simultaneously non-zero. Hence,  for $x \in S_k$ and $y \in S_j$ such that $j <k$ and $\mu_j^\ep$ and $\mu_{k}^\ep$ not both equal to zero, then necessarly $|k-j|\geq 2$ and  $2^{k-2}\leq  \left\lVert x-y \right\rVert_2 \leq 2^{k+1}$. This implies in particular that
	 \begin{align}
	 	\label{eq:mu_kmu_j}
	 	\int_{E \cap S_j} \int_{ E \cap S_k} \dfrac{d \mu_k^\ep(x)\, d \mu^\ep_j(y) }{\left\lVert x-y \right\rVert_2 ^{s-\ep} } \leq C 2^{-k(s-\ep)}  \mu_k^\ep(E \cap S_k) \, \mu_j^\ep(E \cap S_j),
	 \end{align}
	 the inequality being in fact close to be sharp. 
	
	Finally, combining \eqref{eq:mu_kmu_j} and \eqref{eq:intkj}), one gets that  for some $C'>0$, 
 	\begin{align*}
 		I_2 &  \leq 
 		  C'\sum_{n\geq 0}  2^{n(s- \ep)}  \sum_{ k> j \geq n}  2^{(n-k)(1-s+\ep)} 2^{-k(s-\ep)} \mu_k^\ep(E \cap S_k) \, \mu_j^\ep(E \cap S_j)\\
		  & = 
 		 C'\sum_{n\geq 0}  2^{n }  \sum_{ k> j \geq n}   2^{-k}   \mu_j^\ep(E \cap S_j) \mu_k^\ep(E \cap S_k) \\ 
		 &=  C'\sum_{j\geq 0}  \left(\sum_{n=0}^j 2^{n }\right)   \mu_n^\ep(E \cap S_n) \sum_{k\geq n+1} 2^{-k} \mu_k^\ep(E \cap S_k) \\ 
		 &\leq  
 		 C'\sum_{n\geq 0} 2^{n} \mu_n^\ep(E \cap S_n) \sum_{k\geq n+1} 2^{-k} \mu_k^\ep(E \cap S_k) \\ 
		 & \leq  C'\sum_{n\geq 0} 2^{-n}  \mu_n^\ep(E \cap S_n)\left (\sum_{k=0}^n 2^{k }\mu_k^\ep(E \cap S_k) \right) .
 	\end{align*}
This last double sum is finite, because the set $E$ was chosen so that \eqref{eq-finale2} holds true. This concludes the proof.	   \end{proof}

%
 

\bibliography{biblio_lara} 
\bibliographystyle{plain}

\end{document}